\definecolor{dblue}{rgb}{0,0,0.70}
\newcommand*{\rom}[1]{\expandafter\@slowromancap\romannumeral {\sharp}1@}
\theoremstyle{definition}
\newcommand{\pow}{\mathcal{P}}
\newcommand{\WO}{\mathrm{WO}}
\newcommand{\ck}{\mathrm{ck}}
\renewcommand{\Col}{\mathrm{Col}}
\selectfont\symbol{60}\fontencoding{\encodingdefault}}
\selectfont\symbol{62}\fontencoding{\encodingdefault}}
\newcommand{\tmdate}[1]{\today}
\newcommand{\NN}{\mathbb{N}}
\newtheorem{fact}{Fact}[section]
\newtheorem{theorem}[fact]{Theorem}
\newtheorem*{theorem*}{Theorem}
\newtheorem{lemma}[fact]{Lemma}
\newtheorem{corollary}[fact]{Corollary}
\newtheorem{definition}[fact]{Definition}
\newtheorem{question}[fact]{Question}
\newtheorem*{question*}{Question}
\newtheorem*{problem*}{Problem}
\newtheorem*{generalproblem*}{General Problem}
\newtheorem*{problem A}{Problem 1}
\newtheorem*{problem B}{Problem 2}
\newtheorem*{claim*}{Claim}
\theoremstyle{remark} 
\newtheorem{remark}[fact]{Remark}
\newenvironment{enumerate-(a)}{\begin{enumerate}[label={\upshape (\alph*)}, leftmargin=2pc]}{\end{enumerate}}
\newenvironment{enumerate-(a)-r}{\begin{enumerate}[label={\upshape (\alph*)}, leftmargin=2pc,resume]}{\end{enumerate}}
\newenvironment{enumerate-(A)}{\begin{enumerate}[label={\upshape (\Alph*)}, leftmargin=2pc]}{\end{enumerate}}
\newenvironment{enumerate-(A)-r}{\begin{enumerate}[label={\upshape (\Alph*)}, leftmargin=2pc,resume]}{\end{enumerate}}
\newenvironment{enumerate-(i)}{\begin{enumerate}[label={\upshape (\roman*)}, leftmargin=2pc]}{\end{enumerate}}
\newenvironment{enumerate-(i)-r}{\begin{enumerate}[label={\upshape (\roman*)}, leftmargin=2pc,resume]}{\end{enumerate}}
\newenvironment{enumerate-(I)}{\begin{enumerate}[label={\upshape (\Roman*)}, leftmargin=2pc]}{\end{enumerate}}
\newenvironment{enumerate-(I)-r}{\begin{enumerate}[label={\upshape (\Roman*)}, leftmargin=2pc,resume]}{\end{enumerate}}
\newenvironment{enumerate-(1)}{\begin{enumerate}[label={\upshape (\arabic*)}, leftmargin=2pc]}{\end{enumerate}}
\newenvironment{enumerate-(1)-r}{\begin{enumerate}[label={\upshape (\arabic*)}, leftmargin=2pc,resume]}{\end{enumerate}}
\begin{document}





\author{Merlin Carl}
\address{Europa-Universit\"at Flensburg, 
Institut f\"ur mathematische, naturwissenschaftliche und technische Bildung
Abteilung f\"ur Mathmematik und ihre Didaktik
Geb\"aude Riga 1, 
Auf dem Campus 1b, 
24943 Flensburg} 
\email{merlin.carl@uni-flensburg.de}
\urladdr{}

\author{Philipp Schlicht}
\address{School of Mathematics, University of Bristol, Fry Building, Woodland Road, Bristol, BS8 1UG, UK
and 
Institute for Mathematics, University of Vienna, Kolingasse 14-16, 1090 Vienna, Austria} 
\email{philipp.schlicht@bristol.ac.uk }

\author{Philip Welch}
\address{School of Mathematics, University of Bristol, Fry Building, Woodland Road, Bristol, BS8 1UG, UK} 
\email{p.welch@bristol.ac.uk}

\thanks{We thank our anonymous referee for several remarks that helped to clarify and improve the exposition of our work.} 
\thanks{This project has received funding from the European Union's Horizon 2020 research and innovation programme under the Marie Sk{\l}odowska-Curie grant agreements No 794020 of the second-listed author (Project \emph{IMIC: Inner models and infinite computations}). He was also partially supported by FWF grant number I4039.}

\title{Decision times of infinite computations} 
\date{\today}

\begin{abstract} 
The \emph{decision time} of an infinite time algorithm is the supremum of its halting times over all real inputs. 
The \emph{decision time} of a set of reals is the least decision time of an algorithm that decides the set; semidecision times of semidecidable sets are defined similary. 
It is not hard to see that $\omega_1$ is the maximal decision time of sets of reals. 
Our main results determine the supremum of countable decision times as $\sigma$ and that of countable semidecision times as $\tau$, where $\sigma$ and $\tau$ denote the suprema of $\Sigma_1$- and $\Sigma_2$-definable ordinals, respectively, over $L_{\omega_1}$. 
We further compute analogous suprema for singletons. 
\end{abstract} 

\maketitle

\thispagestyle{plain} 

\setcounter{tocdepth}{2}

\section{Introduction}

Infinite time Turing machines (ittm's) were invented by Hamkins and Kidder as a natural machine model allowing a standard Turing machine to operate not only through unboundedly many finite stages, but  transfinitely, thus passing through an $\omega$'th stage and beyond. 
They link computability, descriptive set theory and low levels of the constructible hierarchy. 

While the analogy pursued by Hamkins and Lewis \cite{hamkins2000infinite} was that of Turing reducibility and its degree theory, the notion of recursion most closely analogous to it in the literature turned out to be that of Kleene recursion (see for example \cite{Hi78} for an account of this).
In this theory, $\Pi^1_1$ sets of integers were characterised by transfinite processes bounded by $\omega_1^{\ck}$ in time, and could be viewed as resulting from computation calls (either viewed as arising from systems of equations, or from Turing machines) along wellfounded computable trees. 

This theory can be reformulated by modeling transfinite processes by ittm's. 
\cite{hamkins2000infinite} showed that the subsets, either of the natural numbers or Baire space, which are computable by the basic ittm's, fall strictly between the $\Pi^1_1$ and $\Delta^1_2$ sets.
They thus provide natural classes in this region and thereby form a natural test case for properties of classes in the Wadge hierarchy. 

The computational properties of infinite time Turing machines lead to some new phenomena that do not occur in Turing machines. 
For instance, there are two natural notions of forming an output -- besides a program halting in a final state with an element of Cantor space on its output tape, one can consider the `eventual output', if it occurs, when the output tape is seen to stabilise even though the machine has not formally halted, but is perhaps just working away on its scratch tape. In terms of the machine architecture one could argue that this eventual output is characteristic: to analyse the halting times of ittm's, one has in any case to analyse these stabilisation times, and halting is a special case of stabilisation.

Another new phenomenon is the appearance of new reals on the output or work tapes beyond all 
stabilisation times in processes which do not stabilise at all. What are these reals? Hamkins and Lewis dubbed such reals {\em accidental}. They are constructed in some process, written to the output tape, but are evanescent: later they may be overwritten and disappear.

We make this clearer by giving a brief sketch how these machines work (for more details see \cite{hamkins2000infinite}.) 
An {\em ittm-program} is  just a regular Turing program. 
The `hardware' of an ittm consists of an input, work and output tape, each a sequence of cells of order type $\omega$, and a single head that may move one cell to the left or right along the tapes each of which thus have an initial leftmost cell, and are infinite in the rightward direction. The read/write head can read, say, the $n$'th cell from each of the three tapes simultaneously, if the head is situated in position $n$. 
(The three tapes are convenient for defining stabilisation, but a single tape model has the same computational strength.)
Each cell contains $0$ or $1$.  At time $\alpha$, the machine proceeds to  $\alpha+1$ by following the ordinary Turing program, and acts depending on what it sees on the tape at its current position and the program dictates, just as an ordinary Turing machine of such a kind would. 
However at any limit stage $\mu$ of time, by fiat, the contents of every cell on the tape is set to the inferior limit or \emph{liminf} of the earlier contents at times $\alpha < \mu$ of this cell. Thus a `$1$' is in a cell at stage $\mu$ if and only if $\exists \beta <\mu \forall \alpha \in (\beta,\mu)$ that cell has a `$1$'. The head is positioned at the liminf of its positions at times $\alpha <\mu$, and the current state or instruction number to the liminf again of those prior to $\mu$. 

Note that in \cite{hamkins2000infinite} an inessentially different but equivalent set of rules is obtained: limsups rather than liminfs were used
for cell values, the read/write head was returned to position zero, and a special limit state 
was entered. This set of rules can be seen to provide the same class of computable functions. Other variations are possible. It is the liminf (or limsup) rule on the cell values that is the determining feature. Its $\Sigma_{2}$ nature is complete in the sense that all other possible choices of $\Sigma_{2}$-definable rules are reducible to it by \cite[Theorem 2.9]{W}. 

These machines define the following classes of sets. 
We loosely call any element of Baire space, Cantor space or $\pow(\omega)$ a \emph{real}. 
Let $\chi_A$ denote the characteristic function of a subset $A$ of $\pow(\omega)$. 
Such a set $A$ is called \emph{ittm-decidable} if and only if there is an ittm-program $p$ such that $p(x)$ halts with output $\chi_A(x)$ for any subset $x$ of $\omega$. 
We shall often omit the prefix \emph{ittm}. 
$A$ is called \emph{semidecidable} if and only if there is an ittm-program $p$ such that $p(x)$ halts precisely if $x\in A$. 
We say that $A$ is \emph{cosemidecidable} if its complement is semidecidable. 
For singletons $A=\{x\}$, we call $x$ \emph{recognisable} if $A$ is decidable. 
Further, $x$ will be called \emph{semirecognisable} if $\{x\}$ is semidecidable, and cosemirecognisable if $\{x\}$ is cosemidecidable. 
This terminology 
is analogous to 
automata theory and 
recognisable languages. 
From a  logician's perspective, one 
might call a real $x$ implicitly definable if the singleton $A=\{x\}$ is definable. 
In the `lost melody theorem' of \cite{hamkins2000infinite}, the divergence 
between implicitly and explicitly definable reals is studied. 
This phenomenon does not appear in Turing machines. 
But it is not a new phenomenon:  
for example, the real  
$0^{(\omega)}$ 
coding all arithmetical truths is implicitly definable by a $\Pi^{0}_{2}$ definition, whilst obviously 
it cannot be explicitly defined at any finite level of the arthmetical hierarchy.

By a result of the third-listed author \cite[Theorem 1.1]{welch2000length}, the supremum of ittm-halting times on empty input equals the supremum $\lambda$ of writable ordinals, {\em i.e.} those ordinals for which an ittm can compute a code on zero input. 
This solved a well known problem posed by Hamkins and Lewis. 
We consider an extension of Hamkins' and Lewis' problem by allowing all real inputs. 
More precisely, we consider the problem which decision times of sets and singletons are possible. 
This is defined as follows. 

\newpage 

\begin{definition} \ 
\begin{enumerate-(a)} 
\item 
The \emph{decision time} of a program $p$ is the supremum of its halting times for arbitrary inputs. 
\item 
The \emph{decision time} of a decidable set $A$ is the least decision time of a program that decides $A$. 
\item 
The \emph{semidecision time} of a semidecidable set $A$ is the least decision time of a program that semidecides $A$. 
\item 
The \emph{cosemidecision time} of a cosemidecidable set $A$ is the semidecision time of its complement.
\end{enumerate-(a)} 
\end{definition} 






Since halting times are always countable, it is clear that these ordinals are always at most $\omega_{1}$. It is also not hard to see that the bound $\omega_{1}$ is attained (see Lemma \ref{long decision times} below). 
The questions are then: 
Which countable ordinals can occur as decision or semidecision times of sets of real numbers? 
Which ordinals occur for singletons? 
As there are only countable many programs, there are only countably many semidecidable sets of real numbers, so the suprema of the countable (semi-)decision times must be countable. 
Writing $\omega_1$ for $\omega_1^V$ (throughout the paper), we shall show that these
suprema can be determined in the course of this paper. (We call a supremum \emph{strict} to emphasise that it is not attained.)

\begin{definition} \ 
\begin{enumerate-(a)} 
\item 
$\sigma$ denotes the first $\Sigma_1$-stable ordinal, {\em i.e.} the least $\alpha$ with $L_\alpha\prec_{\Sigma_1}L_{\omega_1}$.
Equivalently, this is the supremum of the $\Sigma_1^{L_{\omega_{1}}}$-definable ordinals (see Lemma \ref{versions of sigma}).\footnote{We mean that for an ordinal $\alpha$, the set $\{\alpha\}$ is $\Sigma_1$-definable over $L_{\omega_1}$, or equivalently in $V$. The reason for writing $L_{\omega_1}$ here is to make the analogy with the definition of $\tau$ clear. } 
\item 
$\tau$ denotes the supremum of the $\Sigma_{2}^{L_{\omega_{1}}}$-definable ordinals. 
\end{enumerate-(a)} 
\end{definition} 

While the value of $\sigma$ is absolute, we would like to remark that $\tau$ is sensitive to the underlying model of set theory. 
For instance, $\tau<\omega_1^L$ holds in $L$, but $\tau>\omega_n^L$ if $\omega_n^L$ is countable in $V$. 
Note that $\tau$ equals the ordinal $\gamma^1_2$ studied in \cite{MR1011178} by a result of \cite{countableranks}. 

The following are our main results: 

\begin{theorem*} (see Theorem \ref{countable decision times}) 
The strict supremum of countable decision times for sets of reals equals $\sigma$. 
\end{theorem*}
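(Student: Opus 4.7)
The plan is to establish both directions separately: every countable decision time is strictly less than $\sigma$ (upper bound), and such decision times are cofinal in $\sigma$ (lower bound).

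For the upper bound, let $p$ be a program with countable decision time $\delta$; the aim is to show $\delta < \sigma$. I would use the $\Sigma_1$-stability $L_\sigma \prec_{\Sigma_1} L_{\omega_1}$ together with the dual characterisation of ittm-halting noted in the introduction: the statement ``$p$ halts on $x$'' is simultaneously $\Sigma_1$ over $L_{\omega_1}$ (witnessed by a halting computation) and $\Pi_1$ over $L_{\omega_1}$ (witnessed by the absence of a loop, detectable by stabilisation at the $\zeta^x$-stage; cf.\ \cite{welch2000length}). Applying $\Sigma_1$-stability with parameter $x \in L_\sigma \cap 2^\omega$, the halting computation of $p$ on $x$ already exists in $L_\sigma$, so $\mathrm{halt}(p,x) < \sigma$ for every real $x \in L_\sigma$. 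Hence the supremum of halting times over $L_\sigma$-inputs is at most $\sigma$. The crux is to show that no real $x \in L_{\omega_1} \setminus L_\sigma$ forces $\mathrm{halt}(p,x) \geq \sigma$. I would attempt this by reformulating ``$\delta$ is the decision time of $p$ and $\delta < \omega_1$'' as a $\Sigma_1$-definable condition over $L_{\omega_1}$ using the $\Pi_1$-description of halting; alternatively, I would argue by contradiction, constructing from a putative $x$ with $\mathrm{halt}(p,x) \geq \sigma$ a real $y$ on which $p$ loops, contradicting that $p$ decides.

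For the lower bound, it suffices to produce for each $\Sigma_1$-definable ordinal $\beta < \sigma$ a decidable set with decision time at least $\beta$. Given a $\Sigma_1$-formula $\phi$ with $\{\beta\}$ as its extension over $L_{\omega_1}$, a witness $w$ to $\phi(\beta)$ lies in some $L_\gamma$ with $\gamma < \omega_1$ and can be coded by a real $x^*$ that also codes a wellorder of type $\beta$. Consider the program $p_\phi$ which, on input $x$, first decodes $x$ as a candidate witness-plus-wellorder, verifies $\phi(\beta_x)$ in the decoded transitive structure for the coded ordinal $\beta_x$, and if valid walks through the entire coded wellorder before halting; on invalid input it halts immediately. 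Then $p_\phi$ halts on every real, so it decides a set, and on $x^*$ it takes at least $\beta$ steps. Letting $\phi$ range over $\Sigma_1$-formulas defining ordinals cofinally below $\sigma$ makes the decision times cofinal in $\sigma$, which together with the upper bound yields that the supremum is $\sigma$ and is not attained.

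The principal difficulty is the upper bound for reals outside $L_\sigma$: $\Sigma_1$-stability cleanly bounds only halting times arising from $L_\sigma$-inputs, while the general case requires either a strengthened reflection argument making the entire decision time $\Sigma_1$-definable over $L_{\omega_1}$, or a delicate diagonalisation exploiting the stabilisation behaviour at $\zeta^x$ to turn a hypothetical slow computation past $\sigma$ into a looping one. I expect this to be where the bulk of the technical effort lies, likely drawing on the finer analysis of the $\lambda$--$\zeta$--$\Sigma$ hierarchy of ittm-ordinals.
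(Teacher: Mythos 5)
Your overall decomposition (upper bound via reflection to $L_\sigma$, lower bound via cofinally many hard sets) matches the paper's, but both halves have genuine gaps. For the upper bound, you correctly isolate the crux --- inputs $x\notin L_\sigma$ --- and then leave it unresolved. The per-input use of $\Sigma_1$-stability cannot be completed: it only applies to parameters in $L_\sigma$, and most reals are not even constructible. The missing idea is to reflect the \emph{whole} statement at once rather than input by input. For a fixed code $y$ of $\alpha$, the assertion $\forall x\ p(x){\downarrow}^{\leq\alpha}$ is $\Pi^1_1(y)$ (cf.\ Lemma \ref{long decision times}), so ``$\exists \alpha<\omega_1\ \forall x\ p(x){\downarrow}^{\leq\alpha}$'' is $\Sigma^1_2$; by Shoenfield absoluteness it holds in $L$, and since $L_\sigma$ is $\Sigma^1_2$-correct a witness $\alpha<\sigma$ exists, which by upward absoluteness of the $\Pi^1_1$ matrix bounds the decision time in $V$. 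This is a few lines (it is exactly the proof of Theorem \ref{countable decision times}), not ``the bulk of the technical effort''; your alternative of converting a hypothetically slow computation into a looping one is not needed, and I do not see how it would be carried out.

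The lower bound has a more serious gap: you bound from below the running time of \emph{one particular program} $p_\phi$ on one input $x^*$, but the decision time of a set is the \emph{least} decision time over all programs deciding it, so you must exclude that some other program decides the same set much faster. The paper does this via Lemma \ref{recognisable reals appear quickly}: if \emph{any} program semirecognises $x$ and halts on input $x$ by time $\alpha$, then $x\in L_{\alpha^\oplus}$ (proved by a forcing argument plus the effective perfect set theorem); choosing a recognisable real $x\notin L_{\beta^\oplus}$, namely the $<_L$-least code of $L_{\beta^\oplus}$ for a suitable index $\beta$, then forces \emph{every} recognising program to take at least $\beta^\oplus$ steps. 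You have no analogue of this, and without it the claimed cofinality in $\sigma$ does not follow. Separately, ``$p_\phi$ halts on every real'' does not give a \emph{countable} decision time: your program must classify ill-founded inputs, which it cannot do ``immediately'', and Theorem \ref{Borel set with uncountable decision time} shows that totality plus Borelness of the decided set is not enough. Your construction is probably repairable (stop the peeling process as soon as the $\Sigma_1$ fact is verified, so that all halting times are bounded by roughly $\omega\cdot\beta$), but this needs to be argued; the paper instead obtains countability from the second half of Theorem \ref{sigma upper set clockable bound}, showing every recognisable real is recognisable in time ${<}\lambda^x<\sigma$ by a synchronous simulation.
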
 


\begin{theorem*} (see Theorem \ref{supremum of countable semidecision times}) 
The strict supremum of countable semidecision times for sets of reals equals $\tau$.  
\end{theorem*}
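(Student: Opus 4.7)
The plan is to prove both that countable semidecision times are strictly below $\tau$ and that they are cofinal in $\tau$.

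For the upper bound, suppose $p$ is an ittm program semideciding a set $A$ with countable decision time $\alpha$. I would exhibit a $\Sigma_2^{L_{\omega_1}}$-definition of $\alpha$ (with only the natural-number code of $p$ as parameter), whence $\alpha<\tau$. The predicate ``$p(x)$ halts at stage $\beta$'' is $\Delta_1^{L_{\omega_1}}$ in the parameters $x,\beta$, since it can be checked in $L_{\beta+1}[x]$. Accordingly, $\alpha$ is the unique ordinal satisfying the conjunction of (a) the $\Pi_1$-statement $\forall x\,\forall \beta\,[(p(x)\!\downarrow\text{ at }\beta)\Rightarrow \beta\le\alpha]$, and (b) the statement $\forall \gamma<\alpha\,\exists x\,\exists \beta>\gamma\,(p(x)\!\downarrow\text{ at }\beta)$, which is $\Sigma_1$ in the admissible $L_{\omega_1}$ because $\Sigma_1$-formulas are closed under bounded universal quantification there ($\Sigma_1$-reflection). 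This yields a $\Delta_2$-definition of $\alpha$. That the quantifier over reals may be restricted to $(2^\omega)^{L_{\omega_1}}$ without changing the supremum follows from $\Sigma^1_2$-absoluteness of ittm halting; equivalently, every halting time is realised below the first $x$-admissible above $x$, and so is captured in $L_{\omega_1}$.

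For the cofinality direction, given $\beta<\tau$, I would construct a semidecidable set with decision time at least $\beta$. Choose $\gamma\ge\beta$ that is $\Sigma_2^{L_{\omega_1}}$-definable, say as the unique $v$ satisfying $\varphi(v)\equiv \exists u\,\forall w\,\psi(v,u,w)$ with $\psi$ in $\Delta_0$. I would design a program $p$ that, on input $x$ parsed as a code for an ordinal $\delta$ together with a prospective witness $u$, computes the constructible hierarchy stage by stage, searching for some $\rho>\delta$ and verifying $\forall w\,\psi(\rho,u,w)$ at a sufficiently $\Sigma_2$-stable level of $L$ obtained from the liminf dynamics of the work tape; halting occurs precisely when the verification succeeds. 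On inputs coding $\delta<\gamma$ this gives halting at times at least $\delta$, so the decision time is at least $\gamma\ge\beta$, while inputs failing to code a suitable pair cause non-halting, keeping the set semidecidable rather than decidable.

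The main obstacle is the cofinality direction. Implementing the $\Pi_1$-verification inside an ittm requires exploiting the correspondence between the liminf rule and $\Sigma_2$-definability over $L$, realised by running the computation up to an appropriate stability level whose location depends on the witness $u$ being tested. Aligning the halting behavior with the intended set, controlling the halting times so that their supremum reaches $\gamma$ rather than stalling earlier, and ensuring the set is semidecidable but has countable (not $\omega_1$) decision time together constitute the technical heart of the proof.
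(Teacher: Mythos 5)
Your upper bound is essentially the paper's argument: the (countable) supremum of halting times of a program $p$ is pinned down by a conjunction of a $\Pi_1$ and a $\Sigma_1$ formula over $L_{\omega_1}$, hence is $\Sigma_2^{L_{\omega_1}}$-definable and therefore below $\tau$. (Both you and the paper pass silently over the point that the quantifier $\forall x$ over $L_{\omega_1}$ only ranges over constructible reals, whereas the decision time is a supremum over all reals of $V$; your appeal to ``$\Sigma^1_2$-absoluteness of ittm halting'' handles halting times below $\omega_1^L$ but not, as stated, the case where a non-constructible input halts later than every constructible one. This is a shared wrinkle, not a defect specific to your write-up.)

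The cofinality direction, however, has two genuine gaps. First, and most importantly, you bound from below the decision time of \emph{one particular program}, but the semidecision time of a set is the \emph{least} decision time over all programs that semidecide it. Nothing in your plan rules out that the set your slow program happens to semidecide is also semidecided by some other program whose halting times are all small. This lower bound on \emph{every} semideciding program is the heart of the paper's proof: it takes a $\Pi^1_1$ set $A_\nu\subseteq\WO$ of codes for $\nu$-indices, unbounded in $\sigma_\nu$ (with $\nu$ ranging over $\Pi_1^{L_{\omega_1}}$-definable ordinals, shown cofinal in $\tau$ by a separate lemma), and argues that if some program semidecided $A_\nu$ with decision time $\gamma<\sigma_\nu$, then $A_\nu$ would be $\Sigma^1_1(x_g)$ for a suitably generic code $x_g$ of $\gamma$ with $\omega_1^{\ck,x_g}<\sigma_\nu$, contradicting effective $\Sigma^1_1$-boundedness. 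No analogue of this appears in your proposal. Second, the program you describe is not realisable as stated: it is supposed to halt ``precisely when the verification of $\forall w\,\psi(\rho,u,w)$ succeeds'', but this is a $\Pi_1$ fact over $L_{\omega_1}$ (essentially $\Pi^1_2$), and a halting ittm computation can only certify information that reflects to a countable level; making such reflection happen at a controlled, sufficiently large stage is exactly the difficulty, which you defer to ``the technical heart''. The paper sidesteps it entirely: its witness sets are $\Pi^1_1$ (hence decidable by a machine that merely checks wellfoundedness and a first-order property of $L_{\alpha_x}$), and the largeness of the semidecision time comes from the boundedness argument rather than from the machine verifying a $\Pi_1^{L_{\omega_1}}$ statement. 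As it stands, the proposal establishes only the upper bound.
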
 


\begin{theorem*} (see Theorems \ref{sigma upper set clockable bound}, \ref{supremum of semidecision times of semirecognisable singletons} and \ref{supremum of cosemidecision times of reals}) 
The strict suprema of decision times, semidecision times and cosemidecision times for singletons equal $\sigma$. 
\end{theorem*}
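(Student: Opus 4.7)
The theorem bundles three upper-bound-plus-lower-bound statements, and I would prove each separately, all aiming at the common value $\sigma$.

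For the upper bounds, the decision-time case is immediate from Theorem \ref{countable decision times}, since any decidable singleton is in particular a decidable set. For the semidecision time of a singleton $\{x\}$, suppose the program $p$ semidecides $\{x\}$ and halts on input $x$ at stage $\beta$. Because $x$ is the unique real on which $p$ halts, the pair $(x,\beta)$ is $\Sigma_1$-definable over $L_{\omega_1}$: we can say ``there exist a real $y$ and an ordinal $\gamma$ such that $p$ halts on $y$ at stage $\gamma$'' and use uniqueness. In particular $\beta$ is $\Sigma_1$-definable over $L_{\omega_1}$, hence $\beta<\sigma$. For the cosemidecision time of $\{x\}$, $x$ is merely $\Pi_1$-definable (as the unique non-halting input of $p$) and the supremum of halting times is taken over $y\neq x$, so the previous trick does not directly apply. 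I would aim to show that this supremum is nonetheless $\Sigma_1$-definable, for instance by characterising it as the least ordinal $\gamma$ at which $\{y:p(y) \text{ halts by } \gamma\}$ stabilises in a $\Sigma_1$-describable way, or alternatively by showing that every cosemirecognisable real is in fact recognisable with a comparable decision time (using the cosemidecision-time bound, which must itself be writable from $x$, to construct a bounded decision procedure) and invoking the decision-time upper bound.

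For the matching lower bounds I would proceed uniformly: given $\alpha<\sigma$, produce a real $x_\alpha$ such that the natural recognition, semirecognition, and cosemirecognition programs for $x_\alpha$ all run for at least $\alpha$ steps on the appropriate inputs. A natural candidate is a real coding an initial segment $L_{\alpha'}$ where $\alpha'$ is a $\Sigma_1^{L_{\omega_1}}$-definable ordinal with $\alpha'\geq \alpha$; existence of such $\alpha'$ is guaranteed by Lemma \ref{versions of sigma}. The recognition program for $x_\alpha$ has to verify the defining $\Sigma_1$-property of $\alpha'$, and this can be arranged to consume at least $\alpha'$ stages. To separate the cosemidecision case, one would exhibit inputs $y\neq x_\alpha$ that agree with $x_\alpha$ on very long initial segments, so that refuting $y=x_\alpha$ requires the full verification time.

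I expect the principal obstacle to be the upper bound in the cosemidecision case, where the $\Sigma_1$-definability of $x$ that drove the semidecision argument is lost. Overcoming this either requires a direct $\Sigma_1$ characterisation of the supremum $\beta$ of halting times (promoting a prima facie $\Pi_1$ or $\Sigma_2$ definition to $\Sigma_1$ via the rigidity of ittm-computations and the boundedness-based stabilisation of the halting set), or the reduction of cosemirecognisability to recognisability with a controlled blow-up of decision time. The lower bounds should then be comparatively routine once one has a uniform coding of ordinals $<\sigma$ by reals whose recognition genuinely costs the full ordinal in computation time.
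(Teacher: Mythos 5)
Your semidecision upper bound is exactly the paper's argument and is fine: the halting time of $p(x)$ is the unique $\gamma$ such that $\exists y\ p(y){\downarrow}$ at stage $\gamma$, hence $\Sigma_1^{L_{\omega_1}}$-definable and below $\sigma$ by Lemma \ref{versions of sigma}. The other parts have genuine gaps. For decision times, the upper bound is \emph{not} immediate from Theorem \ref{countable decision times}: that theorem only bounds the \emph{countable} decision times of decidable sets, and decidable sets can have decision time $\omega_1$ ($\WO$ does, and Theorem \ref{Borel set with uncountable decision time} gives even a cocountable open decidable set that is not semidecidable in countable time, so Borel-ness of the singleton does not help). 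You must first show that some recogniser of $\{x\}$ has decision time below $\sigma$; this is the real content of Theorem \ref{sigma upper set clockable bound}, proved by running the given recogniser $p$ in parallel with an auxiliary program that enumerates levels $L_{\alpha+1}$ from the halting outputs of all programs on the given input and stops once it sees a witness $z$ with $p(z){\downarrow}^{\leq\alpha}1$, yielding the bound $\lambda^x<\sigma$. Your lower bounds are also incomplete as stated: arranging that \emph{the natural} recognition program runs for at least $\alpha$ steps says nothing about the least decision time over \emph{all} programs. The paper's mechanism is Lemma \ref{recognisable reals appear quickly}: if $p$ semirecognises $x$ and $p(x){\downarrow}^{\leq\alpha}$, then $x\in L_{\alpha^\oplus}$ (proved via a $\Col(\omega,\alpha)$-generic extension of $L_{\alpha^\oplus}$ and the effective perfect set theorem); taking $x$ to be the $<_L$-least code of $L_{\beta^\oplus}$ for an index $\beta$ then forces every (semi)recogniser to take at least $\beta^\oplus$ steps on $x$. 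Without some such lemma the lower bounds do not get off the ground.

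For the cosemidecision case, one of your two proposed routes is a dead end: cosemirecognisable reals need not be recognisable (the paper proves such reals occur cofinally in $L_\sigma$, answering a question of Carl), so no reduction of cosemirecognisability to recognisability can give the upper bound. The other route (a direct $\Sigma_1$ characterisation of the supremum of halting times of the \emph{given} program $p$) is also not what is needed, since that supremum could a priori be large; the paper instead proves an analogue of the quick-recognition lemma (Lemma \ref{cosemirecognisable reals appear quickly}: the final loop of $p(x)$ has some length $\alpha<\sigma$ and $x\in L_{\alpha^\oplus}$) and then constructs a \emph{new} cosemirecogniser $r$, running $p$ in parallel with a theory-machine-based search for some $z$ whose computation $p(z)$ has a short final loop, whose decision time is at most $(\Sigma^x)^\oplus<\sigma$. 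The matching lower bound is likewise not routine: it is a contradiction argument applying Lemma \ref{recognisable reals appear quickly} to a join $x_\beta\oplus x_\gamma$ of recognisable codes, not an argument about inputs agreeing with $x$ on long initial segments (which is not a meaningful obstruction for ittm's, since the entire input can be scanned in $\omega$ steps).
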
 


We also prove the existence of semidecidable and cosemidecidable singletons that are not recognisable. 
The latter answers \cite[Question 4.5.5]{carl2019ordinal}. 

Since there are gaps in the clockable ordinals (see \cite{hamkins2000infinite}), it is natural to ask whether there are gaps below $\sigma$ in the countable decision times. 
We answer this in Theorem \ref{uniform running time bound} by showing that gaps of arbitrarily large lengths less than $\sigma$ exists.

\section{Preliminaries}

We fix some notation and recall some facts. 

The symbols $x,y,z,\ldots$ will be reserved for reals and elements of Cantor space ${}^{\omega}2$. 
$\WO$ will denote the set of reals that code strict total orders of $\omega$ which are wellordered.   This class  is a complete $\Pi^{1}_{1}$ set of reals. It is a basic result of \cite[Cor. 2.3]{hamkins2000infinite} that is ittm-decidable.

As usual in admissible set theory (see \cite{Bar}), we write $\omega_{1}^{x}=\omega_{1}^{x,\ck}$ for the least ordinal not recursive in $x$. It is thus the least ordinal $\alpha$ so that $L_{\alpha}[x]$ is an admissible set. We use the well-known fact that if $M$ is a transitive model which is a union of admissible sets that are elements of $M$, then $\Sigma^{1}_{1}$ relations are absolute to $M$. Thus if $\varphi(y)$ is a $\Sigma^{1}_{1}$ statement about $y\in M$, then $\varphi(y)\Leftrightarrow \varphi(y)^{M}$. 

When $\alpha$ is an ordinal, let $\alpha^\oplus$ denote the least admissible ordinal that is strictly above $\alpha$. When $\alpha$ is an ordinal which is countable in $L$, then $x_\alpha$ denotes the $<_{L}$-least real coding $\alpha$. 
Conversely, when a real  $x$, regarded as a set of integers, codes an ordinal via a recursive pairing function on $\omega$, then we denote this ordinal by $\alpha_x$. 

We write $p(x){\downarrow}y$ if a program $p$ with input $x$ halts with $y$ on its output tape, and $p(x){\downarrow}^{\leq\alpha}y$ if it  so halts at or before time $\alpha$. 
Moreover, $y$ can be omitted if one does not want to specify the output. 

\begin{definition} \ 
Each of the following is defined as the supremum of ordinals coded by reals of the following form: 
\begin{enumerate-(a)} 
\item 
$\lambda$ for halting outputs of some $p(0)$. 
\item 
$\zeta$ for stable outputs of some $p(0)$. 
\item 
$\Sigma$ for reals which appear on the tape of some computation $p(0)$. 
\end{enumerate-(a)} 
\end{definition}

These three classes are then the \emph{writable}, the \emph{eventually writable}, and the \emph{accidentally writable} ordinals respectively.

The relativisations for reals $x$ are denoted $\lambda^x$, $\zeta^x$ and $\Sigma^x$.  
In particular $\omega_{1}^{x}$ is smaller than $\lambda^{x}$. 
From this one can show the following fact, which we shall use without further mention in the sequel. 
If  $p(x){\downarrow}y$, then we say that $y$ is {\em ittm-computable} from $x$ and write $y\leq_{\infty}x$. We further write $x=_{\infty}y$ if $x\leq
_{\infty} y $ and $ y\leq_{\infty} x$. 

\begin{lemma} 
\label{L_lambdax is writable-invariant}  
$y \leq_{\infty}x \Leftrightarrow y\in L_{\lambda^{x}}[x] \Leftrightarrow L_{\lambda^{y}}[y] \subseteq L_{\lambda^{x}}[x]$.
\end{lemma}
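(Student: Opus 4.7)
The plan is to establish the two biconditionals in succession, relying on the standard fact (from \cite{welch2000length} in its relativized form) that $\lambda^x$ is admissible and is a limit of $x$-admissible ordinals.

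First I would prove $y \leq_\infty x \Leftrightarrow y \in L_{\lambda^x}[x]$, which is the relativization to $x$ of the characterization of the ittm-writable reals. For the forward direction, if $p(x)\downarrow y$ at stage $\alpha$, then $\alpha < \lambda^x$ by definition. The $\Sigma_1$-recursion producing the sequence of configurations of $p(x)$ terminates at $\alpha$ and takes place inside any $x$-admissible set containing $\alpha$; since $\lambda^x$ is a limit of $x$-admissibles, we may pick such a $\gamma$ with $\alpha < \gamma < \lambda^x$, and then $y \in L_\gamma[x] \subseteq L_{\lambda^x}[x]$. Conversely, if $y \in L_{\lambda^x}[x]$, then $y$ appears as the $n$-th element of $L_\alpha[x]$ in the canonical enumeration, for some $\alpha < \lambda^x$ and $n < \omega$. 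Since $\alpha$ is $x$-writable, an ittm on input $x$ may first write a real coding $\alpha$, then simulate the transfinite construction of $L_\alpha[x]$ in time roughly $\alpha$, and read off the $n$-th element to obtain $y$.

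For the second biconditional, the implication $L_{\lambda^y}[y] \subseteq L_{\lambda^x}[x] \Rightarrow y \in L_{\lambda^x}[x]$ is immediate, since $y \in L_{\omega+1}[y] \subseteq L_{\lambda^y}[y]$. Conversely, suppose $y \in L_{\lambda^x}[x]$, so that $y \leq_\infty x$ by the first equivalence. I would first show $\lambda^y \leq \lambda^x$: given any program $q$ halting on input $y$ at some stage $\beta < \lambda^y$, one obtains an ittm on input $x$ by first computing $y$ and then running $q$, which halts below $\lambda^x$; hence every $y$-writable ordinal is $x$-writable. It then remains to see that $L_\alpha[y] \in L_{\lambda^x}[x]$ for every $\alpha < \lambda^y$. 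This follows from absoluteness of the relative constructibility operator combined with the admissibility of $\lambda^x$: if $y \in L_\beta[x]$ with $\beta < \lambda^x$ and $\alpha < \lambda^y \leq \lambda^x$, then $L_\alpha[y] \in L_{\beta+\alpha+\omega}[x] \subseteq L_{\lambda^x}[x]$, since admissibility gives closure of $\lambda^x$ under ordinal addition.

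The main point requiring care, granted the admissibility of $\lambda^x$ as background, is the backward direction of the first equivalence: producing an ittm on input $x$ that writes a prescribed element of $L_{\lambda^x}[x]$ within time less than $\lambda^x$. Once that simulation is in hand, the other directions reduce to straightforward closure arguments and to the trivial observation that $y$ is always an element of $L_{\lambda^y}[y]$.
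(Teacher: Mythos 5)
The paper states this lemma without proof, as a standard consequence of the relativized $\lambda$-$\zeta$-$\Sigma$ facts recalled in the preliminaries, and your argument is exactly the intended one: both biconditionals are established correctly, with the real work (as you note) in showing that every element of $L_{\lambda^x}[x]$ is writable from $x$ and that $L_{\lambda^y}[y]$ is absorbed into $L_{\lambda^x}[x]$. One small point: the literal bound $L_\alpha[y]\in L_{\beta+\alpha+\omega}[x]$ is optimistic, since the $L[x]$-hierarchy may need more than one level to catch each level of the $L[y]$-hierarchy, but your appeal to the admissibility of $\lambda^x$ --- $\Sigma_1$-replacement applied to the recursion $\gamma\mapsto L_\gamma[y]$ of length $\alpha<\lambda^x$ --- is what actually closes the argument, so nothing is lost.
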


We shall use the following characterisation of $\lambda$, $\zeta$ and $\Sigma$: 

\begin{theorem} 
\label{lambda-zeta-Sigma-theorem} 
 {\em (The $\lambda$-$\zeta$-$\Sigma$ Theorem; {\em cf.} \cite{W}, \cite[Corollary 32]{welch2009characteristics}) \\ 
The triple  $(\lambda,\zeta,\Sigma)$ is the lexicographically least triple so that $L_{\lambda}\prec
_{\Sigma_{1}}L_{\zeta}\prec_{\Sigma_{2}}L_{\Sigma}$. }
\end{theorem}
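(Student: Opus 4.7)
The plan is to prove the two halves separately: first that the triple $(\lambda,\zeta,\Sigma)$ does satisfy the stated elementarity chain, and then that no lexicographically smaller triple does. Throughout, the tool is a universal ittm simulation $U$ on empty input, which in parallel traces every program, and for which (by the definitions of the three ordinals) the writable, eventually writable, and accidentally writable reals encode precisely the reals in $L_\lambda$, $L_\zeta$, $L_\Sigma$ respectively (using Lemma \ref{L_lambdax is writable-invariant} for the relative versions).

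For the elementarity, to establish $L_\lambda\prec_{\Sigma_1}L_\zeta$, suppose $\exists y\,\varphi(y,a)$ is a $\Sigma_1$ statement with $a\in L_\lambda$ that holds in $L_\zeta$. Any witness $y_0\in L_\zeta$ is eventually writable, so codes for a level $L_\beta$ containing the witness appear stably on the tape of $U$. But ``$L_\beta\models\exists y\,\varphi(y,a)$'' is a statement decidable by a halting ittm computation once a code for $L_\beta$ is given (the check is $\Delta_1$ in $L$), so we may design a program that searches for the least such $\beta$ and halts outputting a witness; hence the witness is writable and lies in $L_\lambda$. The converse direction is immediate from $L_\lambda\subseteq L_\zeta$. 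For $L_\zeta\prec_{\Sigma_2}L_\Sigma$, one argues in parallel but now replaces halting by stabilisation: a $\Sigma_2$ witness in $L_\Sigma$ is extracted by a program whose tape eventually stabilises on the correct witness, using that the relevant inner universal quantifier can be verified by a $\Pi_1$-check carried out on a stable output code.

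For minimality, suppose $(\alpha,\beta,\gamma)<_{\mathrm{lex}}(\lambda,\zeta,\Sigma)$ satisfies $L_\alpha\prec_{\Sigma_1}L_\beta\prec_{\Sigma_2}L_\gamma$. If $\alpha<\lambda$, pick a writable ordinal $\delta$ with $\alpha\leq\delta<\lambda\leq\beta$; the assertion ``there exists a real $y$ and a program $p$ such that $p$ halts with $y$, and $y$ codes an ordinal $\geq\alpha$'' is $\Sigma_1$ and true in $L_\beta$ but false in $L_\alpha$, contradicting $\Sigma_1$-elementarity. If $\alpha=\lambda$ but $\beta<\zeta$, take an eventually writable ordinal $\delta$ with $\beta\leq\delta<\zeta\leq\gamma$; stability of $U$'s output on a code for $\delta$ is expressible as $\exists x\,\exists\mu\,\forall\nu>\mu\,(\text{tape at }\nu\text{ is }x\text{ and }x\text{ codes an ordinal }\geq\beta)$, which, relativised to the $L$-hierarchy, becomes a $\Sigma_2$ statement true in $L_\gamma$ but false in $L_\beta$. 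The remaining case $\alpha=\lambda,\beta=\zeta,\gamma<\Sigma$ is handled by an accidentally writable ordinal exceeding $\gamma$ and a similar $\Sigma_2$ statement asserting its appearance on the tape of $U$.

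The main obstacle is calibrating the quantifier complexity at each level: it is essential that the witnesses extracted from $\Sigma_1$-truths in $L_\zeta$ actually become \emph{writable} (not merely eventually writable), and likewise that $\Sigma_2$-truths in $L_\Sigma$ yield \emph{eventually writable} witnesses. This hinges on the precise match between the ittm operational notions (halting, stabilising, mere occurrence) and the logical complexity classes ($\Sigma_1$, $\Sigma_2$, and the tape-content level), which is exactly the content reflected in the Hamkins--Lewis/Welch analysis of a universal ittm computation; once that correspondence is set up carefully, both the elementarity and minimality follow as above.
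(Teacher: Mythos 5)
The paper does not actually prove this statement: it is quoted as a known background theorem with citations to \cite{W} and \cite[Corollary 32]{welch2009characteristics}, so there is no in-paper proof to compare yours against. Judged on its own, your sketch gets the $\Sigma_1$ half of the positive direction essentially right (generate levels $L_\beta$, check $L_\beta\models\exists y\,\varphi(y,a)$, argue the search halts, conclude the witness is writable and hence in $L_\lambda$). But the $\Sigma_2$-elementarity $L_\zeta\prec_{\Sigma_2}L_\Sigma$ is the genuinely hard half, and your one-sentence reduction ``replace halting by stabilisation'' does not carry it. The natural algorithm keeps a current candidate witness $y$ for $\exists y\,\forall z\,\varphi$ and revises it whenever a counterexample $z$ to the inner $\Pi_1$ clause appears at a later level; the whole difficulty is to show this process \emph{stabilises}. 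The countably many candidates $<_L$ the true witness may a priori be refuted at stages cofinal in $\Sigma$ (which has cofinality $\omega$), so convergence is not automatic. The actual argument goes through the analysis of the universal machine's final loop from $\zeta$ to $\Sigma$, using that the snapshot at a limit time $\delta$ is $\Sigma_2$-definable over $L_\delta$; that analysis (which also underlies the identification of the eventually and accidentally writable reals with $L_\zeta$ and $L_\Sigma$, which you take as given) is exactly what you would need to supply.

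The minimality direction is structurally flawed. If $(\alpha,\beta,\gamma)<_{\mathrm{lex}}(\lambda,\zeta,\Sigma)$ falls into your first case $\alpha<\lambda$, then nothing is known about $\beta$ and $\gamma$, yet you choose a writable $\delta$ with $\alpha\leq\delta<\lambda\leq\beta$, i.e.\ you silently assume $\beta\geq\lambda$; likewise your second case assumes $\gamma\geq\zeta$. In addition, your separating sentences use $\alpha$ (resp.\ $\beta$) as a parameter, which is not an element of $L_\alpha$ (resp.\ $L_\beta$), and ``$p$ halts with output $y$'' is only true in $L_\beta$ when $L_\beta$ actually contains the halting computation, which fails if $\beta$ is small. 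The standard route runs in the opposite direction: from $L_\beta\prec_{\Sigma_2}L_\gamma$ one shows that the universal machine's snapshot at time $\beta$ equals its snapshot at time $\gamma$ and that the looping criterion is met, so the machine is already in its final loop at $\beta$; hence $\Sigma\leq\gamma$, $\zeta\leq\beta$, and every halting computation halts before $\beta$, whence $L_\alpha\prec_{\Sigma_1}L_\beta$ forces each halting computation into $L_\alpha$ and $\lambda\leq\alpha$. This gives coordinatewise, hence lexicographic, minimality; without some version of this looping argument your case analysis does not close.
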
 

The action of any particular program $p$ 
on input an integer, will of course depend on the program itself, but there are programs $p$ for which $p(k)$ 
does not halt but runs for ever. The significance of the ordinals $\zeta$ and $\Sigma $ in this case is that by stage $\zeta$, the machine will enter a \emph{final loop} from $\zeta$ to $\Sigma$. 
In a final loop from $\alpha$ to $\alpha+\beta$, the snapshots of the machine at stage $\alpha+(\beta\cdot \gamma)$ are by definition identical for all ordinals $\gamma$. 
In other words, the loop is repeated endlessly. 
(A computation may have a loop that is iterated finitely often without it being of this final looping kind.)
The point is that one can easily recognise final loops from $\alpha$ to $\alpha+\beta$ as the loops with the following property for each cell: if the inferior limit $1$ is attained at $\alpha+\beta$, then the contents of the cell is constant throughout the loop. 

Of particular interest is the \emph{Theory Machine} ({\em cf.} \cite{friedman2007two}) that also does not stop, but on zero input writes (and overwrites) the $\Sigma_{2}$-theories of the levels of the $L_{\alpha}$ hierarchy to the output tape for all $\alpha \leq\Sigma$. But the $\Sigma_{2}$-Theory of $L_{\Sigma}$ is identical to that of $L_{\zeta}$. Hence 
there is a final loop from $\zeta$ to $\Sigma$, but no shorter final loop and none beginning before $\zeta$. 

In this  `$\lambda$-$\zeta$-$\Sigma$-Theorem' one should note that $\Sigma$ is a limit of admissible ordinals, but is not itself admissible. As part of the analysis of this theorem, $\gamma$, the supremum of {\em halting times} of computations $p(k)$ 
on integer   input (the \emph{clockable} ordinals) was shown to be $\lambda$ (\cite{welch2000length}). The ordinals which have a code computed as an output of some $p(k)$ 
(the \emph{writable} ordinals) are an initial segment of the countable ordinals, but the clockables are not.

\begin{definition} \ 
$\sigma_\nu$ denotes the supremum of $\Sigma_1^{L_{\omega_1}}$-definable ordinals with parameters in $\nu\cup\{\nu\}$. 
\end{definition}

\begin{lemma}
\label{versions of sigma} 
$L_{\sigma_\nu}\prec_{\Sigma_1} L_{\omega_1}$ for any countable ordinal $\nu$. 
\end{lemma}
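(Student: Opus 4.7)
The plan is to analyse the $\Sigma_1$-Skolem hull of $L_{\omega_1}$ over $\nu \cup \{\nu\}$ and apply condensation for $L$. Let $H$ be this hull. By the standard $\Sigma_1$-Skolem-hull construction $H \prec_{\Sigma_1} L_{\omega_1}$, and condensation applied to the Mostowski collapse yields an isomorphism $\pi \colon H \to L_\gamma$ for some countable $\gamma$. Since $\nu \cup \{\nu\} = \nu + 1$ is transitive and contained in $H$, $\pi$ restricts to the identity on $\nu + 1$.

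The main step is to identify $\gamma$ with $\sigma_\nu$ and show that $\pi$ fixes every ordinal in $H$. For $\sigma_\nu \leq \gamma$: if $\alpha$ is the unique witness in $L_{\omega_1}$ to a $\Sigma_1$-formula with parameters from $\nu + 1$, then $\alpha \in H$ by construction of the hull, and $\pi(\alpha)$ is the unique witness to the same formula in $L_\gamma$ (the parameters being fixed by $\pi$). Upward $\Sigma_1$-absoluteness from $L_\gamma$ to $L_{\omega_1}$ then makes $\pi(\alpha)$ a witness in $L_{\omega_1}$, so by uniqueness $\pi(\alpha) = \alpha < \gamma$; taking the supremum yields $\sigma_\nu \leq \gamma$. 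For the converse, each $\beta < \gamma$ equals $\pi(\alpha')$ for some $\alpha' \in H \cap \Ord$; by the hull construction $\alpha'$ is $\Sigma_1^{L_{\omega_1}}$-definable from $\nu + 1$, hence $\alpha' < \sigma_\nu$ (the class of such definable ordinals has no largest element, since successors of definable ordinals are again definable), and $\beta \leq \alpha' < \sigma_\nu$ since $\pi$ contracts ordinals. Hence $\gamma = \sigma_\nu$ and $H \cap \Ord = \sigma_\nu$ as a set.

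To conclude, I would show $H = L_{\sigma_\nu}$. For the inclusion $L_{\sigma_\nu} \subseteq H$, each $a \in L_{\sigma_\nu}$ lies in $L_\beta$ for some $\beta < \sigma_\nu$ and equals the $n$-th element of $L_\beta$ in the canonical $<_L$-enumeration for some $n < \omega$; since $\beta, n \in H$, the corresponding $\Sigma_1$-Skolem function returns $a$, forcing $a \in H$. A straightforward induction on $L$-rank, using that $L_{\sigma_\nu}$ is transitive and $\pi$ is the identity on all ordinals of $H$, then gives $\pi(a) = a$ for every $a \in L_{\sigma_\nu}$; injectivity of $\pi$ leaves no room for elements of $H$ outside $L_{\sigma_\nu}$, so $H = L_{\sigma_\nu}$ and the inclusion $L_{\sigma_\nu} \hookrightarrow L_{\omega_1}$ inherits the $\Sigma_1$-elementarity of $H \hookrightarrow L_{\omega_1}$. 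The main obstacle is the calibration $\gamma = \sigma_\nu$, where one must carefully combine upward $\Sigma_1$-absoluteness with uniqueness of $\Sigma_1$-definable witnesses to show $\pi$ fixes the relevant ordinals.
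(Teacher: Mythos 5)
Your argument is correct and is essentially the paper's own proof in a different dress: both take the $\Sigma_1$-definable (Skolem) hull over $\nu\cup\{\nu\}$, apply condensation to its transitive collapse, and use uniqueness of $\Sigma_1$-defined witnesses together with upward $\Sigma_1$-absoluteness from the transitive collapse to force the collapsing map to be the identity on the definable elements, whence the hull is a true initial segment $L_{\sigma_\nu}$. The only slip is the phrase ``the $n$-th element of $L_\beta$ in the canonical $<_L$-enumeration for some $n<\omega$'': for infinite $\beta$ you should instead take the $<_L$-least surjection $f\colon\omega\to L_{\beta+1}$ (which lies in $H$ by $\Sigma_1$-elementarity once $\beta\in H$, since $H\cap\mathrm{Ord}=\sigma_\nu$ has already been established) and write $a=f(n)$; with that repair the final paragraph goes through.
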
 
\begin{proof} 
Assume $\nu=0$ for ease of notation. 
Let $\hat{\sigma}$ denote the least $\alpha$ with $L_\alpha\prec_{\Sigma_1} L_{\omega_1}$. 
It suffices to show that every element of $L_{\hat{\sigma}}$ is $\Sigma_1^{L_{\omega_1}}$-definable. 
If not, then the set $N$ of $\Sigma_1^{L_{\omega_1}}$-definable elements of $L_{\omega_1}$ is not transitive, so the collapsing map $\pi\colon N\rightarrow \bar{N}$ moves some set $x$. 
Assume that $x$ has minimal $L$-rank and $x$ is $\Sigma_1^{L_{\omega_1}}$-definable by $\varphi(x)$. 
One may check that $N\prec_{\Sigma_1}L_{\omega_1}$: suppose $(\exists z\ \psi(z, x_{0},\ldots \, , x_{n}))^{L_{\omega_{1}}}$ with $x_{i}\in
N$ and $\psi$ a $\Sigma_{0}$-formula. Then the $<_L$-least such $z$ is $\Sigma_{1}$-definable in the vector of the $x_{i}$; replacing each $x_{i}$ by its $\Sigma_{1}$-definition, yields a $\Sigma_{1}$ definition of such a $z$. Hence $(\exists z\ \psi(z, x_{0},\ldots \, , x_{n}))^N$.
Thus $\bar{N}\models \varphi(\pi(x))$. 
Since $\bar{N}$ is transitive, this implies $V\models\varphi(\pi(x))$. 
Since $\pi(x)\neq x$, this contradicts the assumption that $\varphi(x)$ has $x$ as its unique solution. 
\end{proof} 

Note that $\Sigma_1$-statements in $H_{\omega_1}$ are equivalent to $\Sigma^1_2$-statements and conversely \cite[Lemma 25.25]{Je03}. 
In particular, $L_\sigma$ is $\Sigma^1_2$-correct in $V$. 
We shall use this without further reference below. 


\section{Decision times}

In this section, we focus on 
the problem of ascertaining the decision times of ittm-semidecidable sets. 

\subsection{The supremum of countable decision times} 

By a standard condensation argument, (see \cite[Thm.1.1]{hamkins2000infinite}) halting times of ittm's on arbitrary inputs are always countable, so it is clear that decision times are always at most $\omega_1$. 

Recall that, by \cite[Corollary 2.3]{hamkins2000infinite}, all $\Pi^1_1$ sets are ittm-decidable. 

\begin{lemma}
\label{long decision times} 
Every set with countable decision time is Borel. 
Hence any non-Borel $\Pi^1_1$ set has decision time $\omega_1$. 
\end{lemma}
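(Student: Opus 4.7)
The plan is to prove the first assertion by transfinite induction on $\beta<\omega_1$, showing that for each ittm-program $p$ and each countable stage $\beta$, the complete machine snapshot at stage $\beta$ of $p(x)$ depends on $x\in 2^\omega$ in a Borel-measurable way. Once this is in place, if $p$ decides $A$ with countable decision time $\alpha$, then every $x$ halts by stage $\alpha$ with output $\chi_A(x)$, so $A$ is cut out by a Borel condition on the stage-$\alpha$ snapshot and is therefore itself Borel.

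To set up the induction, for each $\beta<\omega_1$, each tape, each cell index $n\in\omega$, each bit $i\in\{0,1\}$, each head position $n$ and each program state $q$, I let $B^{n,i}_\beta, P^n_\beta, S^q_\beta\subseteq 2^\omega$ denote the sets of inputs for which the stage-$\beta$ snapshot of $p(x)$ has the corresponding feature. The base case $\beta=0$ is trivial. At a successor $\beta+1$, one ordinary Turing step is a local deterministic transition, so each of these sets is a finite Boolean combination of sets from stage $\beta$ and hence Borel by the inductive hypothesis. At a countable limit $\mu$, the liminf rule yields
\[
B^{n,1}_\mu = \bigcup_{\gamma<\mu}\bigcap_{\gamma<\alpha<\mu} B^{n,1}_\alpha,
\]
and similarly $P^n_\mu$ and $S^q_\mu$ unfold into countable Boolean combinations of stage-$\alpha$ sets ($\alpha<\mu$) via the standard liminf formulas in $\omega$; since $\mu$ is countable, all of these remain Borel.

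The second assertion then follows by combining this with the fact recalled in the Introduction that every $\Pi^1_1$ set is ittm-decidable. A non-Borel $\Pi^1_1$ set $A$ thus admits a deciding program and so has some decision time at most $\omega_1$; if this decision time were countable, the first part would force $A$ to be Borel, contrary to assumption. Hence the decision time must equal $\omega_1$.

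I expect no serious obstacle here. The only mild subtlety lies in the limit step of the induction, where one has to be careful to handle the head position and program state in addition to the cell contents, and to verify that their liminfs can also be expressed as countable Boolean combinations of Borel sets. This is routine because both take values in $\omega$, so the liminf unfolds into a countable union of countable intersections in the usual way.
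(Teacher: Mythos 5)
Your argument is correct, but it takes a genuinely different route from the paper. The paper observes that for a fixed countable $\alpha$, the relation $p(x){\downarrow}^{\leq\alpha}$ is expressible both by a $\Sigma^1_1$ formula (there exists a halting sequence of snapshots along a wellorder coding $\alpha$) and by a $\Pi^1_1$ formula (every such sequence of snapshots halts, using determinacy/uniqueness of the computation) in any real coding $\alpha$, and then concludes Borelness in one step from Lusin's separation theorem (equivalently, Suslin's theorem that $\Delta^1_1=$ Borel). You instead prove Borelness from scratch by transfinite induction on the stage, showing that each feature of the stage-$\beta$ snapshot cuts out a Borel set of inputs; the successor step is a (finite or countable) Boolean combination and the limit step unfolds the liminf rule as $\bigcup_\gamma\bigcap_{\gamma<\alpha<\mu}$. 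Your approach is more elementary -- it avoids Suslin's theorem entirely -- and it yields strictly more information, namely an explicit bound on the Borel rank of $A$ in terms of the decision time $\alpha$. The paper's approach is shorter, and the $\Sigma^1_1$-and-$\Pi^1_1$ expressibility of bounded halting is a tool it reuses elsewhere (e.g.\ in the boundedness arguments and in the final section). One small imprecision in your successor step: the new program state depends on the bits read at the current head position, which ranges over $\omega$, so $S^q_{\beta+1}$ is a countable rather than finite Boolean combination of stage-$\beta$ sets; this does not affect Borelness. You should also fix a convention for snapshots after halting (e.g.\ freeze them), so that ``halted by stage $\alpha$ with output $1$'' is read off the stage-$\alpha$ snapshot as you claim; again this is routine. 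The deduction of the second assertion from the first, via ittm-decidability of $\Pi^1_1$ sets and countability of all halting times, matches the paper.
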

\begin{proof} 
Suppose that an ittm-program $p$ semidecides a set $A$ within a countable time $\alpha$. 
Note that $p(x){\downarrow}^{\leq\alpha}$ can be expressed by $\Sigma^1_1$ and $\Pi^1_1$ formulas in any code for $\alpha$. 
By Lusin's separation theorem, it is Borel. 
\end{proof} 

For instance, the set $\WO$ of wellorders on the natural numbers is $\Pi^1_1$-complete and hence not Borel. Its decision time thus equals $\omega_1$.
Since $\Pi_1^1$-sets are ittm-decidable, $\WO$ is ittm-decidable with decision time $\omega_1$ (cf. \cite[Prop. 32]{carl2020space}).

It remains to study sets with countable decision times, and in particular, the following question: 

\begin{question*} 
What is the supremum of \textit{countable} decision times of sets of reals? 
\end{question*} 



We need two auxiliary results to answer this problem. 
The next lemma shows that if $x$ is semi-recognisable, but $x\notin L_{\alpha^\oplus}$, then $\{x\}$'s semidecision  time is greater than $\alpha$. 

\begin{lemma} \
\label{recognisable reals appear quickly} 
\begin{enumerate-(1)} 
\item
\label{recognisable reals appear quickly 1} 
If $p$ semirecognises $x$ and $p(x){\downarrow}^{\leq\alpha}$, then $x\in L_{\alpha^\oplus}$. 
\item 
\label{recognisable reals appear quickly 2} 
The bound of $\alpha^{\oplus}$  is in general optimal.
\end{enumerate-(1)} 
\end{lemma}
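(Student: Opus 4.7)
For part (1), the plan is to realise the set $S=\{y\in 2^{\omega}:p(y){\downarrow}^{\leq\alpha}\}$ as a $\Sigma^{1}_{1}$ set in a real parameter coding $\alpha$, and then invoke a basis theorem. Pick a real $w$ coding $\alpha$ as a wellorder of $\omega$, chosen minimally so that $w\in L_{\alpha^{\oplus}}$ and $\omega_{1}^{w,\ck}=\alpha^{\oplus}$ (e.g.\ the $<_{L}$-least such code when $\alpha<\omega_{1}^{L}$). Halting computations of $p$ of length $\leq\alpha$ are encodable as reals using $w$, so ``$y\in S$'' becomes $\Sigma^{1}_{1}(w)$. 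Since $p$ semirecognises $x$, we have $S=\{x\}$; the relativised Kleene basis theorem then forces the unique element to satisfy $x\in L_{\omega_{1}^{w,\ck}}[w]=L_{\alpha^{\oplus}}$. Equivalently, this can be phrased via absoluteness of illfoundedness: the tree $T$ of partial approximations to a pair $(y,\text{halting computation})$ lies in $L_{\alpha^{\oplus}}$, is illfounded in $V$ (witnessed by $x$ together with the snapshot sequence of $p(x)$), and illfoundedness is absolute between transitive models containing $T$; so a branch exists already in $L_{\alpha^{\oplus}}$, and semirecognition together with absoluteness of the ittm-computation identify the resulting $y$ with $x$.

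For part (2), I plan to exhibit a specific example saturating the bound. Take $\alpha$ to be a suitable countable ordinal (say a small admissible ordinal above which a new real of $L$ appears) and let $x=x_{\alpha}$ be the $<_{L}$-least real coding $\alpha$, so that $x\in L_{\alpha+\omega}\sset L_{\alpha^{\oplus}}$ but $x\notin L_{\alpha}$. The semirecognising program $p$ on input $y$ runs an ittm-simulation of the $L$-hierarchy, verifying that $y$ codes an ordinal with the distinguishing property of $\alpha$ and that $y$ is the $<_{L}$-least such code, halting iff so. For suitable $\alpha$ one arranges $p(x){\downarrow}^{\leq\alpha}$, witnessing that the bound $\alpha^{\oplus}$ in part (1) cannot be replaced by $\alpha$.

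The main obstacle for (1) is to guarantee that a code $w$ of $\alpha$ can be chosen inside $L_{\alpha^{\oplus}}$ with $\omega_{1}^{w,\ck}\leq\alpha^{\oplus}$, i.e.\ that $\alpha$ is $L_{\alpha^{\oplus}}$-countable. This is automatic when $\alpha<\omega_{1}^{L}$; the outlying case $\alpha\geq\omega_{1}^{L}$, in which no such $w$ lies in $L$, needs separate handling, presumably by showing via bounds on $\lambda^{x}$ that semirecognisability at such halting times already forces $x\in L$ (so the conclusion becomes vacuous or is recovered from a smaller halting-time-bound instance of the lemma). For (2), the delicate point is calibrating $\alpha$ and $p$ so that the halting time of $p(x)$ is genuinely $\leq\alpha$, rather than pushed above $\alpha$ by the cost of the internal $L$-simulation.
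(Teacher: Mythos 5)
Your plan for part \ref{recognisable reals appear quickly 1} has the right core (realise $\{x\}$ as a nonempty $\Sigma^1_1(w)$ singleton in a code $w$ for $\alpha$ and apply a basis/perfect-set theorem to land $x$ in $L_{\omega_1^{w,\ck}}[w]$), and this is indeed the engine of the paper's proof. But the step you flag as "the main obstacle" is a genuine gap, and your diagnosis of when it arises is wrong: a code for $\alpha$ need not exist in $L_{\alpha^\oplus}$ even for $\alpha<\omega_1^L$. For instance, let $\mu=\omega_1^{L_\beta}$ where $L_\beta$ is a countable level modelling enough of $\mathsf{ZF}^-$ plus ``$\omega_1$ exists''; then $\mu<\omega_1^L$ but $L_{\mu^\oplus}\subseteq L_\beta$ contains no real coding $\mu$ (more generally, Putnam-style gaps in the constructible reals occur cofinally below $\omega_1^L$). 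So neither your primary route nor your tree variant (which still needs the tree, with liminf-witnesses built in, to sit on $\omega$ via such a code) goes through, and your proposed fallback of concluding merely $x\in L$ is far weaker than $x\in L_{\alpha^\oplus}$. The paper's fix is exactly the missing ingredient: pass to a sufficiently generic $\Col(\omega,\alpha)$-extension $M[g]$ of $M=L_{\alpha^\oplus}$ (which is admissible and in which $\alpha$ is countable), run the effective $\Sigma^1_1$ perfect-set argument there to get $x\in M[g]$, and then use the homogeneity of $\Col(\omega,\alpha)$ to decide each ``$n\in x$'' by the forcing relation over $M$, pulling $x$ back into $L_{\alpha^\oplus}$ itself. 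Without the forcing-plus-homogeneity step the argument only covers those $\alpha$ that happen to be countable in $L_{\alpha^\oplus}$.

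Part \ref{recognisable reals appear quickly 2} is also short of the claim. Exhibiting $x=x_\alpha\in L_{\alpha+\omega}\setminus L_\alpha$ with $p(x){\downarrow}^{\leq\alpha}$ would only show that the bound cannot be lowered to $\alpha$ (and even that is in doubt, since simulating the $L$-hierarchy up to $\alpha$ and verifying well-foundedness typically costs on the order of $\omega\cdot\alpha$ steps, as you concede). Optimality of $\alpha^\oplus$ requires reals recognisable by time $\alpha$ whose $L$-ranks are cofinal in $\alpha^\oplus$. The paper gets this from Rogers' theorem: for each recursive $\gamma<\omega_1^{\ck}$ there is a real which is a $\Pi^0_2$ singleton coding $\omega\cdot\gamma$ iterated Turing jumps, hence recognisable in time $\omega+1$ (a complete $\Pi^0_2$ test runs in $\omega$ steps) yet not in $L_\gamma$; since $\gamma$ ranges cofinally below $(\omega+1)^\oplus=\omega_1^{\ck}$, the bound is optimal already at $\alpha=\omega+1$. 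Your construction does not produce witnesses above $L_{\alpha+\omega}$ and so cannot establish optimality.
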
 
\begin{proof} 
\ref{recognisable reals appear quickly 1} 
Let $M=L_{\alpha^\oplus}$ and take any $\Col(\omega,\alpha)$-generic filter $g\in V$ over $M$. 
Since $\Col(\omega,\alpha)$ is a set forcing in $M$, $M[g]$ is admissible if $g$ is taken to be sufficiently generic. (By \cite[Theorem 10.17]{mathias2015provident}, it suffices that the generic filter meets every dense class that is a union of a $\Sigma_1$-definable with a $\Pi_1$-definable class.)  In this model everything is countable. Let $y\in \WO\cap M[g]$ be a real coding $\alpha$. 

Set $R(z)$ if ``$\exists h\ [ h $ {\em codes a  sequence of computation snapshots of $p(z)$, along the
ordering $y$, which converges with output $1$}]''. Then as $ x \in R$, the latter is a non-empty $\Sigma^{1}_{1}(y)$ predicate; by an effective $\Sigma^{1}_{1}$ Perfect Set Theorem (see \cite[III Thm.6.2]{Sa90})  (relativized to $y$) if there is no solution to $R$ in $L_{\omega_1^y}[y]$ 
then there is a perfect set of such solutions in $V$.  But $R=\{x\}$. Hence $x\in  L_{\omega_{1}^y}[y] = M[g]$. 
As $\Col(\omega,\alpha)$ is homogeneous ({\em cf.}  \cite[Corollary 26.13]{Je03}) we can see that $x\in  L_{\alpha^\oplus}$ by asking for each $n\in\omega$ whether $\Col(\omega,\alpha)$ forces $n$ to be in some real $z$ such that $p(z)$ halts. 




\ref{recognisable reals appear quickly 2} 
This is essentially \cite[Thm LII]{Rog}: to sketch why this is so, take any computable ordinal $\gamma< \omega^{\ck}_{1}$. 
One can construct a real $x$ which is $\Pi^0_2$ as a singleton and codes a sequence of iterated (ordinary Turing jumps) of length $\omega\cdot \gamma$. 
Then $x \notin L_\gamma$, (as the theory of $L_{\gamma}$ is reducible to $x$), but $x$ is recognisable in time $\omega +1$ since $x$ is $\Pi^0_2$ (recall, for example, that on zero input a complete $\Pi^{0}_{2}(x)$ set can be written to the worktape in $\omega$ stages). 
This shows that $\omega^\oplus=\omega^{\ck}_{1}$ is optimal when $\alpha=\omega +1$. 
\end{proof}



The next result will be used to provide a lower bound for decision times of sets. 

\begin{theorem}
\label{sigma upper set clockable bound}
The supremum of decision times of singletons equals $\sigma$. 
\end{theorem}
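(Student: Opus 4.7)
The plan is to prove the equality by establishing the lower and upper bounds separately.

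For the \emph{lower bound}, fix $\alpha<\sigma$; the aim is to exhibit a recognisable singleton of decision time at least $\alpha$. By Lemma~\ref{versions of sigma}, choose a $\Sigma_1^{L_{\omega_1}}$-definable ordinal $\gamma$ with $\gamma>\alpha^\oplus$, and let $x_\gamma$ be the $<_L$-least real coding $\gamma$. Since $x_\gamma$ is $\Sigma_1^{L_{\omega_1}}$-definable as a singleton, I would construct an ittm-program deciding $\{x_\gamma\}$ that, on input $y$, checks $y\in\WO$ (decidable by \cite[Cor.~2.3]{hamkins2000infinite}), extracts the coded ordinal $\alpha_y$, systematically searches the $L$-hierarchy for a witness to the $\Sigma_1$-definition of $\gamma$ applied to $\alpha_y$, and finally confirms $<_L$-minimality of $y$ among reals coding $\alpha_y$. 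Because $\rank_L(x_\gamma)>\alpha^\oplus$ implies $x_\gamma\notin L_{\alpha^\oplus}$, Lemma~\ref{recognisable reals appear quickly} forces any recogniser to halt on $x_\gamma$ at a time $\beta$ with $\beta^\oplus>\alpha^\oplus$; choosing $\alpha$ admissible (admissibles are cofinal in $\sigma$) yields $\beta\geq\alpha$, so the decision time of $\{x_\gamma\}$ is at least $\alpha$.

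For the \emph{upper bound}, let $p$ decide $\{x\}$ with decision time $\alpha$, and assume $\alpha\geq\sigma$ toward a contradiction. The halting-time function $h(y)=\min\{\beta:p(y){\downarrow}^\beta\}$ is total and $\Sigma_1^{L_{\omega_1}}$-definable from $p$ alone. For each $\eta<\sigma$, the $\Sigma_1^{L_{\omega_1}}$ statement ``$\exists y\,\exists\beta>\eta\ p(y){\downarrow}^\beta$'' holds in $L_{\omega_1}$ and, by $L_\sigma\prec_{\Sigma_1}L_{\omega_1}$, reflects to $L_\sigma$, producing a witness pair $(y_\eta,\beta_\eta)\in L_\sigma$. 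Hence $h$ restricted to $\pow(\omega)\cap L_\sigma$ has range cofinal in $\sigma$. The contradiction is drawn from admissibility of $L_\sigma$: applying $\Sigma_1$-collection in $L_\sigma$ to $h$ on each set $\pow(\omega)\cap L_\eta\in L_\sigma$ yields local bounds $\delta_\eta<\sigma$, and combining these using the full reflection properties of the $\Sigma_1$-stable ordinal $\sigma$ produces a uniform bound $\delta<\sigma$ on $h$ over $\pow(\omega)\cap L_\sigma$, contradicting cofinality. Hence $\alpha<\sigma$.

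The \emph{main obstacle} is the upper bound, because $\pow(\omega)\cap L_\sigma$ is in general not itself an element of $L_\sigma$, so $\Sigma_1$-collection cannot be applied to it directly in one step. Overcoming this requires exploiting the $\Sigma_1$-stability of $\sigma$ beyond mere admissibility, for instance via iterated collection combined with the elementarity $L_\sigma\prec_{\Sigma_1}L_{\omega_1}$, or by appealing to the stronger admissibility-type principles satisfied by $\Sigma_1$-stable ordinals. The lower bound, by contrast, is fairly direct once the recognition procedure for reals coding $\Sigma_1^{L_{\omega_1}}$-definable ordinals is carefully verified and combined with Lemma~\ref{recognisable reals appear quickly}.
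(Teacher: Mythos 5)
There are genuine gaps in both halves of your argument. For the \emph{lower bound}, everything hinges on the claim that the $<_L$-least code $x_\gamma$ of an arbitrary $\Sigma_1^{L_{\omega_1}}$-definable ordinal $\gamma$ is recognisable, and this is false in general. For instance $\lambda$ is $\Sigma_1^{L_{\omega_1}}$-definable (as the first coordinate of the lexicographically least pair $(\alpha,\beta)$ with $L_\alpha\prec_{\Sigma_1}L_\beta$, witnessed already in $L_{\zeta+1}$), yet $x_\lambda\in L_\zeta\setminus L_\lambda$, so by Theorem \ref{semidecidable undecidable reals} \ref{semidecidable undecidable reals - part 2} it is not even semirecognisable. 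The defect in your proposed recogniser is that the $\Sigma_1$ witness for $\varphi(\alpha_y)$ lives at a level of $L$ that need not be writable from the input $y$, so the search can fail to terminate even on the positive instance; on negative instances it never terminates, so at best you would get semidecision rather than decision. The paper avoids this by choosing $x$ to be the $<_L$-least code of $L_{\beta^\oplus}$ for an index $\beta$ (a level where some $\Sigma_1$ sentence $\varphi$ first becomes true): that real is recognisable because being the least code of the minimal wellfounded model of $\mathsf{KP}+(V{=}L)+\varphi$ is verifiable \emph{locally} from the input by a wellfoundedness test plus first-order checks, with no external search; Lemma \ref{recognisable reals appear quickly} then yields the bound $\beta^\oplus>\alpha$ just as you intend.

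For the \emph{upper bound}, the contradiction you aim for does not exist. The set $\pow(\omega)\cap L_\sigma$ is not an element of $L_\sigma$, so $\Sigma_1$-collection never applies to it, and a total $\Sigma_1$ function on it can perfectly well have range cofinal in $\sigma$; indeed the proof of Theorem \ref{supremum of countable semidecision times} exhibits programs whose halting times on inputs in $L_{\sigma_\nu}$ have order type exactly the admissible ordinal $\sigma_\nu$. More fundamentally, the statement you are attacking is not the one to prove: a given program deciding $\{x\}$ can easily have decision time $\omega_1$ (prepend a wellfoundedness test on the input to any recogniser), so one cannot bound the decision time of \emph{the} deciding program; one must \emph{construct} a deciding program with small decision time. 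That is what the paper does: run $p$ in parallel with a program $q$ that simulates all programs on the input $y$, builds levels $L_{\alpha+1}$ from any wellorders they output, and halts as soon as it finds inside such a level some $z$ with $p(z){\downarrow}^{\leq\alpha}1$ differing from the input. Since $x\in L_{\lambda^x}$ by Lemma \ref{recognisable reals appear quickly}, the combined program decides $\{x\}$ in time at most $\lambda^x<\sigma$. (Your Shoenfield/$\Sigma^1_2$-correctness instinct is the right tool for showing that a \emph{countable} decision time of a total program must lie below $\sigma$, as in Theorem \ref{countable decision times}, but it only applies after one already has a deciding program with countable decision time, which again requires the construction.)
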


\begin{remark} 
It should be unsurprising that the supremum of decision times is at least $\sigma$. 
It is well known that the $\Pi^{1}_{1}$ singletons are wellordered and appear unboundedly in $L_\sigma$ by work of Suzuki \cite{Su64}, and these are clearly ittm-decidable. Moreover, their order type is $\sigma$ (see e.g. \cite[Exercise 16.63]{Rog}). 
\end{remark} 

\begin{proof}
To see that the supremum is at least $\sigma$, take any $\alpha<\sigma$. 
Pick some $\beta$ with $\alpha<\beta<\sigma$ such that 
some $\Sigma_1$-sentence $\varphi$ holds in $L_\beta$ for the first time.
We claim that the $<_{L}$-minimal code $x$ for $L_{\beta^{\oplus}}$ is recognisable. 
To see this, let $T$ denote the theory $\mathsf{KP}+(V=L)+\varphi$. 
Devise a program $p(z)$ that checks if $z$ codes a wellfounded model of $T+$``{\em there is no transitive model of $T$}'' and halts if so. 
However, 
such a code $x$ is not an element of $L_{\beta^{\oplus}}$.
By Lemma \ref{recognisable reals appear quickly}, the decision time of $\{x\}$ is thus at least $\beta^{\oplus}$. 

It remains to show that any recognisable real $x$ is recognisable with a uniform time bound strictly below $\sigma$.
To see this, suppose that $p$ recognises $x$. 
We shall run $p$ and a new program $q$ synchronously, and halt as soon as one of them does. 
Thus $q$ ensures that the halting time is small. 

We now describe $q$. 
A run 
$q(y)$ simulates all ittm-programs with input $y$ synchronously. 
For each halting output on one of these tapes, we check whether it codes a linear order. 
In this case, run a wellfoundedness test and save the wellfounded part, as far as it is detected. 
(These routines are run synchronously for all tapes, one step at a time.) 
A wellfoundedness test works as follows. 
We begin by searching for a minimal element; this is done by a subroutine that searches for a strictly decreasing sequence $x_0, x_1, \dots$.
If the sequence cannot be extended at some finite stage, we have found a minimal element and add it to the wellfounded part. 
The rest of the algorithm is similar and proceeds by successively adding new elements to the wellfounded part. 
Each time the wellfounded part increases to some $\alpha+1$ by adding a new element, 
we construct a code for $L_{\alpha+1}$. 
(Note that the construction of $L_\alpha$ takes approximately $\omega^{\omega}\cdot \alpha$ many steps\footnote{Roughly, this can be seen as follows: Given a code for $\alpha$, split the tape into $\alpha$ many disjoint portions of length $\omega$ and construct the code level-wise. To pass from level $\xi$ to level $\xi+1$ requires computing the sets $\{a\in L_{\xi}:L_{\xi}\models\phi(a,p)\}$ for each $\in$-formula $\phi$ and each parameter $p\in L_{\xi}$. When $\phi$ is $\Sigma_n$, this can be done in $\omega^n+1$ many steps. Doing this for all pairs $(\phi,p)$ - which can be arranged in order type $\omega$ using our code -- can thus be done with time bound $\omega^\omega$.}.) 
We then search for $z$ such that $p(z){\downarrow}^{\leq\alpha}$
$1$ in $L_{\alpha+1}$. 
We halt if such a $z$ is found and $x\neq z$. 

By Lemma \ref{recognisable reals appear quickly}, $x\in L_{\lambda^x}$. 
So for any $y$ with $\lambda^y\geq\lambda^x$, some $L_\alpha$ satisfying \emph{$p(x){\downarrow}^{\leq\alpha} 1$} appears in $q(y)$ in ${<}\lambda^x$ steps. 
Otherwise $\lambda^y<\lambda^x$, so $p(y)$ will halt in ${<}\lambda^y$ and therefore ${<}\lambda^x$ steps. 
Clearly 
$\lambda^x<\sigma$. 
\end{proof}

We call an ittm-program \emph{total} if it halts for every input. 
We are now ready to prove the main results of this section.

\begin{theorem}
\label{countable decision times}
The suprema of countable decision times of (a) total programs and of (b) decidable sets equal $\sigma$. 
\end{theorem}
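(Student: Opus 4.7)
My plan is to establish the upper bound by a $\Sigma^1_2$-absoluteness argument transferring to $L_\sigma$, and to derive the lower bound directly from Theorem~\ref{sigma upper set clockable bound}.

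The lower bound comes for free. Every recognisable real $x$ yields a decidable singleton $\{x\}$, and a minimal-time program recognising $x$ is total with decision time equal to that of $\{x\}$. Since Theorem~\ref{sigma upper set clockable bound} already shows that the decision times of singletons have strict supremum $\sigma$, the same cofinality holds for total programs and for decidable sets, giving the $\geq \sigma$ inequality in both (a) and (b).

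For the upper bound in (a), let $p$ be a total program with countable decision time. The key observation is that the assertion
\[
\exists r\in {}^{\omega}2\ \forall x\in {}^{\omega}2\ \bigl(p(x)\downarrow^{\leq \alpha_r}\bigr)
\]
is $\Sigma^1_2$ in the natural-number parameter $p$: the kernel ``$p(x)\downarrow^{\leq \alpha_r}$'' is $\Delta^1_1$ in $(r,x)$, and the prefix is one real existential followed by one real universal. By hypothesis this statement holds in $V$, and by the $\Sigma^1_2$-correctness of $L_\sigma$ noted after Lemma~\ref{versions of sigma}, it therefore holds in $L_\sigma$. A witness $r\in L_\sigma$ codes an ordinal $\alpha_r<\sigma$, since $\sigma$ is admissible (indeed, $\Sigma_1$-elementary in $L_{\omega_1}$). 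The inner $\Pi^1_1(r)$ assertion transfers back from $L_\sigma$ to $V$ by the $\Pi^1_1$-correctness entailed by $L_\sigma\prec_{\Sigma^1_2}V$, so $p$ halts on every real input in at most $\alpha_r$ steps, and its decision time is strictly below $\sigma$.

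Part (b) is then immediate. If a decidable $A$ has countable decision time, pick a program $p$ deciding $A$ whose own decision time is countable and witnesses that of $A$; this $p$ is total because it outputs $\chi_A(x)$ on every $x$, so (a) bounds its decision time, and hence that of $A$, strictly below $\sigma$. The only real technical step is the $\Sigma^1_2$-absoluteness argument, and this has already been set up in the preliminaries; the rest is bookkeeping.
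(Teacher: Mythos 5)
Your proof is correct and follows essentially the same route as the paper: the lower bound is quoted from Theorem~\ref{sigma upper set clockable bound}, and the upper bound comes from observing that the existence of a countable uniform halting bound is $\Sigma^1_2$ and invoking the $\Sigma^1_2$-correctness of $L_\sigma$ to pull a witness $\alpha<\sigma$ back to $V$. The paper merely interposes Shoenfield absoluteness to $L$ before descending to $L_\sigma$, which is an inessential difference; your explicit back-transfer of the inner $\Pi^1_1$ statement is a welcome clarification of a step the paper leaves implicit.
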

\begin{proof} 
Given Theorem \ref{sigma upper set clockable bound}, is remains to show that $\sigma$ is a strict upper bound for countable decision times of total programs. 
Suppose that $p$ is total and has a countable decision time. 
Since $\exists \alpha<\omega_1\ \forall x\ p(x){\downarrow}^{\leq\alpha}$ is a $\Sigma^1_2$ statement, this holds in $L$ by Shoenfield absoluteness. 
Since $L_\sigma\prec_{\Sigma^1_2} L$, there is some $\alpha<\sigma$ such that $\forall x\ p(x){\downarrow}^{\leq\alpha}$ holds in $V$, as required. 
\end{proof}



\subsection{Quick recognising}
\label{subsection - Quick recognising}  

The lost melody theorem, {\em i.e.}, the existence of recognisable, but not writable reals in \cite[Theorem 4.9]{hamkins2000infinite} shows that the recognisability strength of ittm's goes beyond their writability strength. 
It thus becomes natural to ask whether this result still works with bounds on the time complexity.
If a real $x$ can be written in $\alpha$ many steps, then it takes at most $\alpha+\omega+1$ many steps to recognise $x$ by simply writing $x$ and comparing it to the input. 
Can it happen that a writable real can be semirecognised much quicker than it can be written? 
The next lemma shows that this is impossible. 

\begin{lemma}{\label{bounded writing time}} 
Suppose that $p$ recognises $x$ and $p(x)$ halts at time $\alpha$. Then: 
\begin{enumerate-(1)} 
\item 
$x\in L_\beta$ for some $\beta< \alpha^\oplus$. 
\item
 $x$ is writable from any real coding $\beta$ in time less than $\beta^\oplus$ steps. 
 If $\beta$ is clockable $x$ is simply writable  in time less than $\beta^\oplus$.

\end{enumerate-(1)} 
\end{lemma}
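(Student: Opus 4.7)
The plan is to first read off (1) from Lemma \ref{recognisable reals appear quickly}\ref{recognisable reals appear quickly 1}: since recognising is in particular semirecognising, that lemma gives $x \in L_{\alpha^\oplus}$, and since $\alpha^\oplus$ is admissible (and in particular a limit ordinal), $x$ lies in $L_{\beta_0}$ for some $\beta_0 < \alpha^\oplus$. The useful strengthening for part (2) is to take $\beta$ large enough that $L_\beta$ internally witnesses the halting of $p(x)$. To arrange this, I would use that the snapshot sequence of $p(x)$ is an $\alpha$-sequence of elements of $L_{\alpha^\oplus}$ and, by admissibility, is itself a member of $L_{\alpha^\oplus}$, so it lies in some $L_{\beta_1}$ with $\beta_1 < \alpha^\oplus$. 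Picking $\beta < \alpha^\oplus$ above $\max(\beta_0, \beta_1, \alpha)$ secures $x \in L_\beta$ and $L_\beta \models p(x){\downarrow}^{\leq\alpha} 1$ simultaneously.

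For the main part of (2), given any real $y$ coding this $\beta$, I would describe a two-phase ittm algorithm running in less than $\beta^\oplus$ many steps. Phase one uses $y$ to iteratively construct a code for $L_\beta$ on the work tape: each successor stage $L_\gamma \to L_{\gamma+1}$ costs $\omega$ many steps (enumerating the $\Sigma_1$-definable subsets of $L_\gamma$), the limit stages are absorbed by the liminf rule, and the total cost is $\omega \cdot \beta < \beta^\oplus$. Phase two searches inside $L_\beta$ for the unique real $z$ satisfying $L_\beta \models p(z){\downarrow} 1$ and outputs it. Existence is guaranteed by the choice of $\beta$ (the witness being $x$ itself), and uniqueness follows from transitivity of $L_\beta$ together with the recognising property of $p$: any such $z$ would have $p(z){\downarrow} 1$ in $V$ and must therefore equal $x$. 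This yields $x$ within $\beta^\oplus$ many steps from any input $y$ coding $\beta$.

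For the clockable addendum, if $\beta$ is clockable then a code for $\beta$ can itself be produced on empty input in time less than $\beta^\oplus$ (by augmenting a clocking program for $\beta$ to simultaneously record its stage sequence into a wellorder of $\omega$), and feeding this code into the algorithm of the previous paragraph produces $x$ without external input in total time still less than $\beta^\oplus$. The main subtlety is really the choice of $\beta$ in (1): I need $\beta$ to sit below $\alpha^\oplus$ and yet be large enough for $L_\beta$ to verify that $p(x)$ halts, and it is precisely the admissibility of $\alpha^\oplus$ that lets both requirements be met on the same side of the bound.
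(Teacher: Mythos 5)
Your proof is correct and follows essentially the same route as the paper: part (1) via Lemma \ref{recognisable reals appear quickly}, and part (2) by building a code for $L_\beta$ from the given code for $\beta$ in roughly $\omega\cdot\beta<\beta^\oplus$ steps (with the quick writing theorem handling the clockable case). You usefully make explicit two points the paper's terse proof leaves implicit --- choosing $\beta<\alpha^\oplus$ large enough (by admissibility of $\alpha^\oplus$) that $L_\beta$ contains the snapshot sequence of $p(x)$, and then identifying $x$ inside $L_\beta$ as the unique $z$ with $L_\beta\models p(z){\downarrow}1$ via upward absoluteness and the recognising property of $p$; the only small slip is that $L_{\gamma+1}$ consists of all first-order definable subsets of $L_\gamma$, not just the $\Sigma_1$-definable ones, which does not affect the time bound.
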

\begin{proof} 
The first claim holds by Lemma \ref{recognisable reals appear quickly}. 
For the second claim,
note that there is an algorithm that writes a code for $\beta$ in at most $\beta$ many steps by the quick writing theorem  \cite[Lemma 48]{welch2009characteristics}. 
One can therefore write codes for $L_\beta$ and any element of $L_\beta$ in less than $\beta^\oplus$ many steps. 
\end{proof} 


\subsection{Gaps in the decision times}
\label{section - gaps} 

It is well known that there are gaps in the set of halting times of ittm's (see \cite[Section 3]{hamkins2000infinite}). 
We now show that the same is true for semidecision times of total programs and thus of sets. 



A \emph{gap} in the semidecision times of programs is an interval that itself contains no such times, but is bounded by one. 



\begin{theorem} 
\label{uniform running time bound} 
For any $\alpha<\sigma$, there is a gap below $\sigma$ of length at least $\alpha$ in the semidecision times of programs. 
\end{theorem}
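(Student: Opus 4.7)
The plan is to argue by contradiction, exploiting the countable cofinality of $\sigma$ and the admissibility of $\sigma$ (which gives closure under ordinal multiplication and addition of its members). Suppose, for a fixed $\alpha<\sigma$, that every interval of length $\alpha$ in $[\omega,\sigma)$ meets the set $T$ of countable semidecision times. By Theorems \ref{sigma upper set clockable bound} and \ref{countable decision times}, $T$ is countable, $T\subseteq[\omega,\sigma)$, and $\sup T=\sigma$.

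My first attempt is the following ordinal-arithmetic calculation. Enumerate $T$ in strictly increasing ordinal order as $\langle t_\nu:\nu<\mu\rangle$. The no-gap hypothesis gives $t_{\nu+1}<t_\nu+\alpha$ at each successor, and applying the hypothesis again at limits to control $t_\lambda$ relative to $\sup_{\nu<\lambda}t_\nu$, transfinite induction yields $t_\nu\leq t_0+\nu\cdot\alpha$ for all $\nu<\mu$. Taking suprema gives $\sigma\leq t_0+\mu\cdot\alpha$; since $\sigma$ is admissible and $\alpha<\sigma$, this forces $\mu\geq\sigma$, hence $\mu=\sigma$, as $T\subseteq\sigma$ bounds $\mu$ above.

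To convert this into a genuine contradiction I would specialise to the $\Sigma_1^{L_{\omega_1}}$-definable subclass $T'\subseteq T$ of decision times of recognisable singletons arising in the proof of Theorem \ref{sigma upper set clockable bound}, each of the form $\beta_\varphi^{\oplus}$ where $\beta_\varphi$ is the least ordinal at which a specified $\Sigma_1$-sentence $\varphi$ first holds in the constructible hierarchy. Since there are only $\omega$-many such sentences, $T'$ has $\omega$-many elements and is cofinal in $\sigma$ by that same proof. Assuming the no-gap hypothesis on $T$ also implies $\alpha$-density of $T'$, the $\omega$-length version of the ordinal-arithmetic calculation, now applied to $T'$, immediately gives $\sigma=\sup_n t'_n\leq t'_0+\omega\cdot\alpha<\sigma$, the desired contradiction.

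The principal obstacle is precisely the reduction from $T$ to $T'$: the no-gap assumption is about arbitrary semidecision times, but the $\omega$-indexed calculation needs density inside the thinner class $T'$. I plan to invoke Lemmas \ref{recognisable reals appear quickly} and \ref{bounded writing time} for the reduction: if a putative decision time $\delta\in(\beta_\varphi^{\oplus},\beta_\varphi^{\oplus}+\alpha)$ of a total program occurs yet no recognisable-singleton decision time lies in that interval, then the halting data of the program's computation on any real input $x$ codes, via Lemma \ref{recognisable reals appear quickly}, a real $x\in L_{\delta^{\oplus}}$ witnessing a new $\Sigma_1$-first ordinal in that range, and hence (after a bounded perturbation absorbed into the choice of $\alpha$) an element of $T'$ within the same interval. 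Making this bookkeeping precise --- and verifying that the bounded perturbation really is absorbed rather than pushing us out of the gap --- is the substantive work; once in place, the ordinal arithmetic on $T'$ closes the argument cleanly.
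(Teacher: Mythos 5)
There is a genuine gap, and your difficulty stems from attacking the problem in the wrong place. The paper's proof never works directly below $\sigma$: it first observes that a gap of length $\alpha$ must exist somewhere below $\omega_1$, by pure cardinality --- there are uncountably many pairwise disjoint intervals of length $\alpha$ below $\omega_1$ but only countably many programs, hence only countably many (semi)decision times. The existence of such a gap is then a $\Sigma^1_2$ statement (equivalently $\Sigma_1$ over $L_{\omega_1}$) with parameter $\alpha\in L_\sigma$, so by $L_\sigma\prec_{\Sigma_1}L_{\omega_1}$ (Lemma \ref{versions of sigma}) a witnessing interval already exists below $\sigma$. Below $\sigma$ itself the cardinality argument is unavailable, since $\sigma$ is a countable ordinal: there are only countably many disjoint intervals of length $\alpha$ in $[\omega,\sigma)$, so a countable set of decision times could in principle meet all of them. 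Your own first calculation confirms this --- it yields only that the order type $\mu$ of $T$ equals $\sigma$, which is perfectly consistent with $T$ being countable (a countable set of ordinals can have any countable order type), so no contradiction is reached.

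The second attempt does not repair this. You need the subclass $T'$ (decision times of the recognisable singletons from Theorem \ref{sigma upper set clockable bound}, of order type $\omega$) to inherit $\alpha$-density from $T$, but this cannot hold: any cofinal subset of $\sigma$ of order type $\omega$ necessarily leaves gaps of every length below $\sigma$, by exactly the computation $\sup_n t'_n\leq t'_0+\omega\cdot\alpha<\sigma$ that you intend to use. So the "reduction" you hope to extract from Lemmas \ref{recognisable reals appear quickly} and \ref{bounded writing time} would have to fail; and indeed those lemmas concern singletons $\{x\}$ and say nothing about an arbitrary total program $p$ with decision time $\delta$ --- there is no singleton, and no mechanism producing an index $\beta_\varphi^{\oplus}$ inside the interval $(\delta,\delta+\alpha)$ or anywhere near $\delta$. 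The missing idea is reflection from $\omega_1$ down to $\sigma$; once you use it, no analysis of the internal structure of $T$ is needed at all.
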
 
\begin{proof} 
Consider the $\Sigma^1_2$-statement \emph{``there is an interval $[\beta,\gamma)$ strictly below $\omega_1$ of length $\alpha$ such that for all programs $p$, there is (i) a real $y$ such that $p(y)$ halts later than $\gamma$, or (ii) for all reals $y$ such that $p(y)$ halts, it does not halt within $[\beta,\gamma)$''}. 
This statement holds, since its negation implies that any interval $[\beta,\gamma)$ strictly below $\omega_1$ of length $\alpha$ contains the decision time of a program. 
Since $L_\sigma\prec_{\Sigma_1}L$ by Lemma \ref{versions of sigma}, such an interval exists below $\sigma$. 
\end{proof}

Note that we similarly obtain gaps below $\tau$ of any length $\alpha<\tau$ by replacing $\sigma$ by $\sigma_\alpha$.

\section{Semidecision times}

In this section, we shall determine the supremum of the countable semidecision times. 
We then study semidecision times of singletons and their complements and show that undecidable singletons of this form exists.

\subsection{The supremum of countable semidecision times}

We shall need the following auxiliary result. 

\begin{lemma} 
\label{tau is the sup of Pi1-definable ordinals} 
The supremum of $\Pi_1^{L_{\omega_1}}$-definable ordinals equals $\tau$. 
\end{lemma}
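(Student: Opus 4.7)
The inequality $\hat\tau \leq \tau$, where $\hat\tau$ denotes the supremum of $\Pi_1^{L_{\omega_1}}$-definable ordinals, is immediate: prepending a dummy existential quantifier turns any $\Pi_1$ formula into a $\Sigma_2$ one, so every $\Pi_1$-definable ordinal is $\Sigma_2$-definable and hence below $\tau$.

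For $\tau\leq\hat\tau$, the plan is to prove $L_{\hat\tau}\prec_{\Sigma_2}L_{\omega_1}$, from which the inequality follows by a uniqueness argument: if $\alpha$ is uniquely defined in $L_{\omega_1}$ by a $\Sigma_2$ formula $\varphi$, then $\Sigma_2$-elementarity produces a witness $v\in L_{\hat\tau}$ for $\exists v\,\varphi(v)$; elementarity and uniqueness in $L_{\omega_1}$ force $v=\alpha$, so $\alpha<\hat\tau$.

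To establish $L_{\hat\tau}\prec_{\Sigma_2}L_{\omega_1}$, I would follow the template of Lemma~\ref{versions of sigma}. Let $N$ be the set of $\Pi_1$-definable elements of $L_{\omega_1}$. One first verifies $N\prec_{\Sigma_1}L_{\omega_1}$ using $<_L$-least $\Sigma_1$-witnesses exactly as in the cited lemma. A Mostowski collapse then shows that $N$ is transitive: if it moved some $x\in N$ of minimal $L$-rank (defined by a $\Pi_1$ formula $\psi$), the collapsed $\bar N$ would be $\Sigma_1$-elementary in $L_{\omega_1}$, hence also $\Pi_1$-elementary, so the statement $\psi(\pi(x))$ which holds in $\bar N$ would transfer up to $L_{\omega_1}$, contradicting the uniqueness of $x$. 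Thus $N=L_{\hat\tau}$.

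The principal technical obstacle is the $\Sigma_2$-elementarity of $N$ in $L_{\omega_1}$: given a $\Sigma_2$ formula $\exists x\,\pi(x,\vec w)$ with $\vec w\in N$ and $\pi$ being $\Pi_1$, witnessed in $L_{\omega_1}$, one must produce a witness $x\in N$. The natural candidate $x_0$, the $<_L$-least witness, is \emph{a priori} only $\Sigma_2$-definable from $\vec w$. The plan is to convert this into a $\Pi_1$-definition by exploiting that $<_L$ is $\Delta_1$ over $L$ and that each component $w_i$ of $\vec w$ has its own $\Pi_1$-definition without parameters; one then packages the minimality clause together with these $\Pi_1$-definitions into a single $\Pi_1$ formula characterising $x_0$. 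Controlling the quantifier complexity during this packaging, so that nothing slips up into $\Pi_2$, is the delicate step and the technical heart of the argument.
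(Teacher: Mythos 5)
Your reduction of $\tau\leq\hat\tau$ to $L_{\hat\tau}\prec_{\Sigma_2}L_{\omega_1}$ is fine, but the hull argument you propose for the latter breaks down at several points, all for the same underlying reason: $\Pi_1$ definitions do not have the closure properties that drive Lemma~\ref{versions of sigma}. First, the transitivity step. $\Pi_1$ statements are \emph{downward} absolute to transitive sets, not upward, so from $\bar N\models\psi(\pi(x))$ you cannot conclude $L_{\omega_1}\models\psi(\pi(x))$; what the collapse isomorphism actually gives you is $L_{\omega_1}\models\psi(x)$, which is no contradiction. (Lemma~\ref{versions of sigma} works precisely because its $\varphi$ is $\Sigma_1$ and therefore transfers \emph{up} from the transitive $\bar N$.) Second, even $N\prec_{\Sigma_1}L_{\omega_1}$ does not follow ``exactly as in the cited lemma'': there one substitutes the $\Sigma_1$-definitions of the parameters into the $\Sigma_1$-definition of the $<_L$-least witness, using that $\exists\vec x\,(\Sigma_1\wedge\Sigma_1)$ is again $\Sigma_1$. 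Substituting $\Pi_1$-definitions instead produces $\exists\vec x\,(\Pi_1\wedge\cdots)$, a $\Sigma_2$ formula (or $\Pi_2$ if you universalise over $\vec x$), so there is no reason the witness lies in the $\Pi_1$-hull. Third, the step you flag as the ``technical heart'' faces the same obstruction in sharper form: for a $\Pi_1$ matrix $\Phi$, the minimality clause $\forall x'<_L x\,\neg\Phi(x')$ is $\Sigma_1$ over admissible sets (bounded $\forall$ of $\Sigma_1$), so the $<_L$-least witness is naturally $(\Pi_1\wedge\Sigma_1)$-, i.e.\ $\Sigma_2$-definable, and no device for forcing this into $\Pi_1$ is offered. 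Since that is exactly where the content of the lemma lies, this is a genuine gap, not a routine verification.

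The paper avoids hulls entirely and proves cofinality directly. Given $\bar\alpha<\tau$ defined by $\exists\beta\,\forall w\,\varphi(\bar\alpha,\beta,w)$, it takes $\bar\delta$ to be the least level at which the lexicographically least pair $(\bar\alpha,\bar\beta)$ satisfying $\forall w\,\varphi$ is correctly identified, and observes that $\bar\delta$ is $\Pi_1$-definable: the clause $\forall w\,\varphi(\alpha,\beta,w)$ is $\Pi_1$ and pins down $\alpha=\bar\alpha$; the clauses ``$L_\delta\models\Psi(\alpha,\beta)$'' and the minimality of $\delta$ (expressed internally as $L_\delta\models\forall\eta\,(\neg\Psi(\alpha,\beta))^{L_\eta}$) are $\Delta_1$ in $\delta$ and pin down $\beta$ and $\delta$; and the quantifiers over $\alpha,\beta$ are bounded by $\delta$, which does not raise the complexity over $\mathsf{KP}$. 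Then $\bar\alpha<\bar\delta<\tau$, so the $\Pi_1$-definable ordinals are cofinal in $\tau$. Note that this localisation to a single level $L_{\bar\delta}$ is precisely what sidesteps the composition and minimisation problems your approach runs into; any repair of your argument would in effect have to prove $\hat\tau=\tau$ by some such direct device first, so the elementarity detour buys nothing here.
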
 
\begin{proof} 
Suppose that $\bar{\alpha}<\tau$ is $\Sigma_2^{L_{\omega_1}}$-definable as a singleton by the formula $\psi(\bar{\alpha}) = \exists \beta\ \forall w\ \varphi(\bar{\alpha},\beta,w)$, where $\varphi$ is $\Delta_0$. 
Let $\Psi(\alpha,\beta)$ abbreviate: 
{\em
$$ ``(\alpha,\beta) \text{ is } <_{lex}\text{-least such that }\forall w\ \varphi(\alpha,\beta,w)".$$} 
Then we shall have $L_{\omega_{1}}\models \Psi(\bar{\alpha},\bar{\beta})$ for some $\bar{\beta}$. 
However then for all sufficiently large $\delta < \omega_{1}$, we have likewise 
$L_{\delta}\models \Psi(\bar{\alpha},\bar{\beta})$. 
To see this, take any $\delta$ such that for each $\alpha<\bar{\alpha}$ there is some $\beta<\delta$ with $\neg\varphi(\alpha,\beta,w)$. 
Now let $\bar{\delta}$ be least with $L_{\bar{\delta}}\models \Psi(\bar{\alpha},\bar{\beta})$. 
Note that $\tau > \bar{\delta}>\max\{\bar{\alpha},\bar{\beta}\}$. 
Then we have  a $\Pi_{1}^{L_{\omega_{1}}}$ definition of $\bar{\delta}$ as a singleton:
$$\delta= \bar{\delta} \Longleftrightarrow \exists \alpha,\beta<\delta\ [\forall w\ \varphi(\alpha,\beta,w)\wedge L_{\delta}\models \mbox{``}\Psi(\alpha,\beta)\wedge \forall \eta\ (\neg\Psi(\alpha,\beta)^{L_{\eta}})\mbox{''} ].$$
There is a bounded existential quantifier in front of the conjunction of two $\Pi_1$ formulas in $\delta$. 
This is a $\Pi_1$ definition in $\delta$ over models of $\mathsf{KP}$. 
The first conjunct guarantees that the witnessing $\alpha$ equals $\bar{\alpha}$; the second conjunct that $\beta = \bar{\beta}$.
Now $\bar{\alpha}<\bar{\delta}<\tau$ as required. 
\end{proof} 

\begin{corollary} If $V=L$ then the $\Pi^1_{2}$ singleton reals appear unboundedly below $\tau$, and $\tau=\delta^{1}_{3}$, the supremum of $\Delta^{1}_{3}$ wellorders of $\omega$.
\end{corollary}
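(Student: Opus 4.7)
The plan is to combine Lemma~\ref{tau is the sup of Pi1-definable ordinals} with the standard correspondences $\Pi^1_2=\Pi_1^{H_{\omega_1}}$ and, by iteration, $\Delta^1_3=\Delta_2^{H_{\omega_1}}$ (see~\cite[Lemma 25.25]{Je03}). Under $V=L$ we have $L_{\omega_1}=H_{\omega_1}$, and Addison's theorem provides a $\Sigma^1_2$-wellordering $<_L$ of $\mathbb{R}$.

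For the first assertion, given $\alpha<\tau$, I invoke Lemma~\ref{tau is the sup of Pi1-definable ordinals} to pick a $\Pi_1^{L_{\omega_1}}$-definable ordinal $\beta$ with $\alpha<\beta<\tau$, witnessed by a formula $\phi(\beta)$. I then take $x_\beta$ to be the $<_L$-least real coding $\beta$ and verify that $\{x_\beta\}$ is $\Pi^1_2$. The characterising conditions on $x=x_\beta$ are (i)~$x\in\WO$, (ii)~$\phi(\alpha_x)$, and (iii)~$\forall y\,(y<_L x\to\neg(y\in\WO\wedge\alpha_y=\alpha_x))$. Here (i)~is $\Pi^1_1$; (ii)~is $\Pi^1_2$, after rewriting it as $\forall\delta\,(\alpha_x=\delta\to\phi(\delta))$ and using that $\alpha_x=\delta$ is $\Sigma_1^{H_{\omega_1}}$; and (iii)~is $\Pi^1_2$, because the antecedent $y<_L x$ is $\Sigma^1_2$ (so its negation is $\Pi^1_2$) while the consequent $\neg(y\in\WO\wedge\alpha_y=\alpha_x)$ lies in $\Delta^1_2\subseteq\Pi^1_2$; the implication is therefore a disjunction of two $\Pi^1_2$ conditions and $\forall y$ preserves this.

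For $\tau=\delta^1_3$, the inequality $\tau\leq\delta^1_3$ follows from the first part, since each $\Pi^1_2$ singleton $x_\beta$ encodes a wellorder $W_\beta$ of $\omega$ of order type $\beta$, and membership $n\,W_\beta\,m$ is equivalent both to $\exists x\,(x=x_\beta\wedge x(\langle n,m\rangle)=1)$ (which is $\Sigma^1_3$) and to $\forall x\,(x=x_\beta\to x(\langle n,m\rangle)=1)$ (which is $\Pi^1_3$), giving $W_\beta\in\Delta^1_3$ and hence $\beta<\delta^1_3$. Conversely, given a $\Delta^1_3$-wellorder $W$ of $\omega$ of order type $\beta$, the identification $\Delta^1_3=\Delta_2^{L_{\omega_1}}$ supplies lightface $\Sigma_2$- and $\Pi_2$-definitions of $W$. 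I express $\gamma=\beta$ by the formula $\exists f\in L_{\omega_1}\,\forall n,m\,(n\,W\,m\leftrightarrow f(n)<f(m))$, and, crucially, use the $\Pi_2$-definition of $W$ for the implication $W(n,m)\to f(n)<f(m)$ and the $\Sigma_2$-definition for the reverse implication. Each half then becomes $\Sigma_2$, the bounded quantifier $\forall n,m$ preserves $\Sigma_2$, and $\exists f\in L_{\omega_1}$ likewise preserves $\Sigma_2$; so the whole formula is $\Sigma_2^{L_{\omega_1}}$ and $\beta<\tau$.

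The central technical obstacle is the quantifier bookkeeping: a naive count in clause (iii) above yields $\Pi^1_3$, and only by exploiting that $<_L$ is $\Sigma^1_2$ together with $\Sigma^1_1\cup\Pi^1_1\subseteq\Pi^1_2$ does the bound drop to $\Pi^1_2$. Analogously, in the direction $\delta^1_3\leq\tau$, one must alternate between the $\Sigma_2$- and $\Pi_2$-definitions of $W$ on the two halves of the biconditional in order to keep the overall complexity at $\Sigma_2^{L_{\omega_1}}$ rather than letting it climb to $\Pi_3^{L_{\omega_1}}$.
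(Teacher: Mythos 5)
Your proof is correct and follows exactly the route the paper intends: the corollary is stated without proof immediately after Lemma \ref{tau is the sup of Pi1-definable ordinals}, and the intended derivation is precisely your combination of that lemma with the correspondences $\Pi^1_2=\Pi_1^{L_{\omega_1}}$ and $\Delta^1_3=\Delta_2^{L_{\omega_1}}$ under $V=L$, together with the $\Sigma^1_2$ wellordering $<_L$. Your careful quantifier bookkeeping (negating the $\Sigma^1_2$ relation $<_L$ in clause (iii), and alternating the $\Sigma_2$/$\Pi_2$ definitions of $W$ across the two halves of the biconditional) is exactly the content that makes the level-by-level analogy with the $\Pi^1_1$/$\sigma$/$\delta^1_2$ case go through.
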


\begin{remark}
The previous corollary is the natural analogue at one level higher of the facts that the $\Pi^{1}_{1}$-singletons appear unboundedly in $\sigma$ and the latter equals the analogously defined $\delta^{1}_{2}$. 
At this lower level, the relevant objects are absolute via Levy-Shoenfield absoluteness and the assumption $V=L$ is not needed. 
\end{remark}

We shall use the effective boundedness theorem: 

\begin{lemma}[Essentially \cite{Sp55}]
\label{effective Sigma11 boundedness} 
The rank of any $\Sigma^1_1(x)$ wellfounded relation is strictly below $\omega_1^{\ck,x}$. 
In particular, any $\Sigma^1_1(y)$ subset $A$ of $\WO$ is bounded by $\omega_1^{\ck,y}$. 
\end{lemma}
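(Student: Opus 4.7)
My plan is to reduce both assertions to the admissibility of the sets $L_{\omega_1^{\ck,x}}[x]$ and $L_{\omega_1^{\ck,y}}[y]$, using $\Sigma_1$-recursion, $\Sigma_1$-replacement, and the $\Sigma^1_1$-absoluteness recalled in the preliminaries. I first prove the statement for wellfounded relations on $\omega$, and then derive the bound for $\Sigma^1_1(y)$ subsets of $\WO$ from it via a tree construction.

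For the first assertion, suppose $R \subseteq \omega \times \omega$ is $\Sigma^1_1(x)$ and wellfounded. Via its tree representation, $R$ is $\Sigma_1$-definable over the admissible set $M = L_{\omega_1^{\ck,x}}[x]$; and since wellfoundedness is $\Pi^1_1$ and hence downward absolute, $R$ remains wellfounded in $M$. In any admissible set, a $\Sigma_1$-definable wellfounded relation on a set in the structure admits a rank function by $\Sigma_1$-recursion (see \cite{Bar}); hence the rank function $\rho : \omega \to \omega_1^{\ck,x}$ is $\Sigma_1$-definable in $M$. Applying $\Sigma_1$-replacement to $\omega \in M$ gives $\rho[\omega] \in M$, whence $\rank(R) = \sup \rho[\omega] < \omega_1^{\ck,x}$.

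For the second assertion, let $A \subseteq \WO$ be $\Sigma^1_1(y)$ and write $A = p[T]$ for a $y$-recursive tree $T$ on $2 \times \omega$. I build a $y$-recursive tree $U$ whose nodes are triples $(\sigma, \tau, \pi)$ with $(\sigma, \tau) \in T$ (where $|\sigma| = |\tau|$) and $\pi$ a strictly $<_\sigma$-descending sequence from $\dom(\sigma)$, ordered by coordinatewise extension. Any infinite branch of $U$ would yield some $a \in A$ together with an infinite $<_a$-descending sequence, contradicting $a \in \WO$, so $U$ is wellfounded. For each $a \in A$ with witness $f$, fixing $(\sigma, \tau) \sqsubseteq (a, f)$ and varying $\pi$ produces inside $U$ a copy of the tree of finite $<_a$-descending sequences, which has rank $|a|$; hence $\rank(U) \geq \sup\{|a| : a \in A\}$. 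After a recursive coding of $\omega^{<\omega}$ into $\omega$, the natural wellfounded relation on $U$ becomes a $y$-recursive wellfounded relation on $\omega$ of rank $\rank(U)$, and the first assertion bounds this strictly below $\omega_1^{\ck,y}$.

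The main obstacle is the rank computation in the second step, where one must invoke the classical fact that the tree of finite strictly descending sequences in a wellorder of ordertype $\alpha$ has rank exactly $\alpha$, and must be careful that encoding $U$ as a relation on $\omega$ preserves both $y$-recursiveness and rank. The admissibility argument in the first step is otherwise routine given the principles recalled in the preliminaries.
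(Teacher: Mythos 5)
Your derivation of the second assertion from the first via the tree $U$ is sound in outline (it is the Kunen--Martin construction, which is also how the paper handles the first assertion), but your proof of the first assertion contains a genuine error. The principle you invoke --- that over an admissible set $M$, a $\Sigma_1$-definable wellfounded relation on a set of $M$ admits a total $\Sigma_1$ rank function into the ordinals of $M$ --- is false. The $\Sigma_1^{L_{\omega_1^{\ck}}}$-definable subsets of $\omega\times\omega$ are exactly the $\Pi^1_1$ ones, and there are $\Pi^1_1$ wellfounded relations on $\omega$ of rank above $\omega_1^{\ck}$: if $\varphi$ is a $\Pi^1_1$-norm on a complete $\Pi^1_1$ set $P\subseteq\omega$, then $m<^{*}n\iff m\in P\wedge(n\notin P\vee\varphi(m)<\varphi(n))$ is $\Pi^1_1$, wellfounded, and has rank $\omega_1^{\ck}+1$; were your principle true, $\Sigma_1$-replacement would bound this rank below $\omega_1^{\ck}$. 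The point you are missing is that $\Sigma$-recursion along $R$ requires each $\pred_R(n)=\{m: m\mathrel{R}n\}$ to be an \emph{element} of $M$, so that $\Sigma$-replacement can be applied to it at the successor step of the induction; for a merely $\Sigma_1$ relation on $\omega$ these predecessor sets are $\Sigma_1$ subsets of $\omega$ and need not belong to $M$.

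The gap can be repaired in two ways. (i) Note that a $\Sigma^1_1(x)$ relation on $\omega$ is in fact $\Delta_1$ over $M=L_{\omega_1^{\ck,x}}[x]$: writing $R(n,m)$ as illfoundedness of an $x$-recursive tree $T_{n,m}$, illfoundedness is the $\Sigma_1$ statement that $M$ contains a branch (by the $\Sigma^1_1$-absoluteness recalled in the preliminaries), while wellfoundedness is the $\Sigma_1$ statement that $M$ contains a rank function for the set $T_{n,m}\in M$. Then $\pred_R(n)\in M$ by $\Delta_1$-separation and your recursion goes through. (ii) Alternatively, first replace $R$ by an $x$-recursive wellfounded relation of at least the same rank --- exactly the Kunen--Martin step the paper cites, and exactly the construction you already carry out for the second assertion; this route also covers the first assertion as actually stated, i.e.\ for $\Sigma^1_1(x)$ wellfounded relations on the \emph{reals}, which your argument for $R\subseteq\omega\times\omega$ does not address. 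A small further repair in your second step: you cannot fix $(\sigma,\tau)\sqsubseteq(a,f)$ and vary $\pi$, since $\pi$ must draw its entries (and the order comparisons it uses) from $\dom(\sigma)$; instead send each descending sequence $\pi$ of length $j$ to $(a\restriction n_j,f\restriction n_j,\pi)$ for a suitable increasing sequence $(n_j)$, which is still extension-preserving and gives $\rank(U)\geq |a|$. For comparison, the paper's self-contained proof of the second assertion is different and softer: if $A$ were unbounded in $\omega_1^{\ck,y}$, a $\Pi^1_1(y)$-complete set of indices would become $\Sigma^1_1(y)$, a contradiction.
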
 

We quickly sketch the proof for the reader. 
The proof of the Kunen-Martin theorem in \cite[Theorem 31.1]{kechris2012classical} shows that the rank of $R$ is bounded by that of a computable wellfounded relation $S$ on $\omega$. 
Since $L_{\omega_1^{\ck,x}}[x]$ is $x$-admissible, the calculation of the rank of $S$ takes place in $L_{\omega_1^{\ck,x}}$ and hence the rank is strictly less than $\omega_1^{\ck,x}$. 

While the second claim (which is essentially due to Spector) follows immediately from the first one, we give an alternative proof without use of the Kunen-Martin theorem. 
Fix a computable enumeration $\vec{p}=\langle p_n\mid n\in\omega\rangle$ of all Turing programs. 
Let $N$ denote the set of $n\in\NN$ such that 
$p_n^y$ is total and the set decided by $p_n$ codes an ordinal. 
By standard facts in effective descriptive set theory (for instance the Spector-Gandy theorem \cite{Sp59, Ga}, see also 
\cite[Theorem 5.3]{Hjorth-Vienna-notes-on-descriptive-set-theory}), $N$ is $\Pi^1_1(y)$-complete.  
In particular, it is not $\Sigma^1_1(y)$. 
Towards a contradiction, suppose that $A$ is unbounded below $\omega_1^{\ck,y}$. 
Then $n\in N$ if and only if there exist $a$ decided by $p_n$, a linear order $b$ coded by $a$ and some $c\in A$ such that $b$ embeds into $c$. 
Then $N$ is $\Sigma^1_1(y)$. 

\begin{theorem} 
\label{supremum of countable semidecision times} 
The supremum of countable semidecision times equals $\tau$. 
\end{theorem}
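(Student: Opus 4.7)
\medskip

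\noindent\textbf{Plan.} The argument divides into an upper bound and a lower bound.

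\medskip

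\noindent\emph{Upper bound ($\leq\tau$).} Suppose $p$ semidecides $A$ with countable semidecision time $\alpha$. Since $p(x){\downarrow}^{\leq\gamma}$ is $\Delta_1^{L_{\omega_1}}$ in $(\gamma,x)$ (the deterministic snapshot sequence is unique), unfolding $p(x){\downarrow}$ as $\exists\beta\,p(x){\downarrow}^{\leq\beta}$ shows $\alpha$ is the least ordinal satisfying the $\Pi_1^{L_{\omega_1}}$ formula
\[
\phi_p(\alpha)\;\equiv\;\forall x\,\forall\beta\,\bigl(\neg p(x){\downarrow}^{\leq\beta}\vee p(x){\downarrow}^{\leq\alpha}\bigr),
\]
with parameter $p\in\omega$. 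By $\Sigma_1$-collection in $\mathsf{KP}$, the side condition ``$\forall\beta<\alpha\,\neg\phi_p(\beta)$'' is $\Sigma_1^{L_{\omega_1}}(\alpha)$; conjoining with $\phi_p(\alpha)$ yields a $\Sigma_2^{L_{\omega_1}}$-definition of $\alpha$ from $p$, so $\alpha<\tau$.

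\medskip

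\noindent\emph{Lower bound ($\geq\tau$).} Given $\alpha<\tau$, Lemma~\ref{tau is the sup of Pi1-definable ordinals} provides a $\Pi_1^{L_{\omega_1}}$-definable ordinal $\beta$ with $\alpha<\beta<\tau$, say the unique ordinal with $\forall w\,\varphi(\beta,w)$ for some $\varphi\in\Delta_0$. Consider
\[
A_\beta\;=\;\{x\in\WO:\alpha_x<\beta\},
\]
which is semidecided as follows: on input $x$, compute $\alpha_x$ and then synchronously run the accidental-writability enumeration, checking for each $\delta\leq\alpha_x$ whether a witness $w$ to $\neg\varphi(\delta,w)$ appears; halt once such witnesses are found for all $\delta\leq\alpha_x$. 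By uniqueness of $\beta$, this program halts exactly when $\alpha_x<\beta$, and as $\alpha_x$ ranges cofinally in $\beta$ the halting times reach $\beta$.

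\medskip

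\noindent To see the semidecision time of $A_\beta$ is genuinely at least $\beta$, suppose $p'$ semidecides $A_\beta$ with decision time $\gamma<\beta$. Then $A_\beta=\{x:p'(x){\downarrow}^{\leq\gamma}\}$ is $\Delta_1^{L_{\omega_1}}$ in any real code $a$ of $\gamma$, so the ordinal
\[
\beta\;=\;\sup\{\alpha_x+1:x\in A_\beta\}
\]
becomes $\Sigma_1^{L_{\omega_1}}(a)$-definable with $a\in L_{\gamma^\oplus}\subseteq L_\beta$. A reflection argument inside $L_{\omega_1}$ using the $\Pi_1$-definability of $\beta$ together with condensation of the witnessing Skolem hull then contradicts the minimality underlying the choice of $\beta$ in Lemma~\ref{tau is the sup of Pi1-definable ordinals}.

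\medskip

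\noindent\emph{Main obstacle.} The delicate step is the rigorous lower-bound verification: ruling out any faster semidecision of $A_\beta$. This requires combining the $\Pi_1$-definability of $\beta$ with a careful reflection argument inside $L_{\omega_1}$, ultimately showing that the $\Sigma_1^{L_{\omega_1}}$-closure of any real code for $\gamma<\beta$ cannot reach $\beta$. The upper bound, by contrast, is a direct definability calculation.
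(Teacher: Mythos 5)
Your upper bound is essentially the paper's argument: the least uniform countable halting bound for a program $p$ is isolated by a $\Pi_1$ clause together with a $\Sigma_1$ leastness clause (via $\Sigma_1$-collection), hence is $\Sigma_2^{L_{\omega_1}}$-definable and so below $\tau$. That part is fine, modulo the usual absoluteness bookkeeping needed to replace quantification over all reals of $V$ by quantification inside $L_{\omega_1}$.

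The lower bound, however, has two genuine gaps, both traceable to the choice of the set $A_\beta=\{x\in\WO:\alpha_x<\beta\}$ for a $\Pi_1^{L_{\omega_1}}$-definable $\beta$. First, your algorithm does not semidecide $A_\beta$: on input $x$ with $\alpha_x<\beta$ it must, for every $\delta\le\alpha_x$, find a witness $w$ with $\neg\varphi(\delta,w)$, but these witnesses live somewhere in $L_{\omega_1}$ and need not belong to $L_{\Sigma^x}[x]$, which is all that the accidental-writability enumeration relative to $x$ can ever produce. A $\Pi_1^{L_{\omega_1}}$ definition evaluated with unbounded universal quantifiers simply cannot be verified locally by the machine, so your program may diverge on members of $A_\beta$; indeed it is not clear that $\WO_{<\beta}$ is semidecidable at all. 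Second, even granting semidecidability, the lower-bound step is only a placeholder (``a reflection argument \dots contradicts the minimality''), and the natural way to make it precise -- effective $\Sigma^1_1$-boundedness applied to a code $a$ of $\gamma<\beta$ -- fails here: the $\Pi_1$-definable ordinals produced by Lemma \ref{tau is the sup of Pi1-definable ordinals} are of the form ``least $\delta$ with $L_\delta\models\Psi$'' and are typically not admissible, let alone limits of admissibles, so one cannot arrange $\omega_1^{\ck,a}<\beta$; a code for an ordinal just below $\beta$ already computes codes for everything below $\beta$. The paper avoids both problems by taking instead the $\Pi^1_1$ set of codes $x$ such that $\alpha_x$ is a $\nu_x$-index, where the $\Pi_1$ formula for $\nu$ is evaluated \emph{inside} $L_{\alpha_x}$ (hence checkable by a wellfoundedness test), and this set is cofinal in $\sigma_\nu$, an admissible limit of admissibles; only then does the generic collapse of $\gamma$ plus Spector boundedness yield the contradiction, giving semidecision time exactly $\sigma_\nu$, which is $\ge\nu$ and hence unbounded in $\tau$. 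Without some such index construction your lower bound does not go through.
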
 
\begin{proof} 
To see that $\tau$ is a strict upper bound, note that the statement \emph{``there is a countable upper bound for the decision time''} is $\Sigma_2^{L_{\omega_1}}$. 
In particular if $(\exists x\ \forall y\ \psi(x,y))^{L_{\omega_1}}$ then
$(\exists x \in L_{\tau}\ \forall y\ \psi(x,y))^{L_{\omega_1}}$.

It remains to show that the set of semidecision times is unbounded below $\tau$. 
In the following proof, we call an ordinal $\beta$ an \emph{$\alpha$-index} if $\beta>\alpha$ and some $\Sigma_1^{L_{\omega_1}}$ fact with parameters in $\alpha\cup\{\alpha\}$ first becomes true in $L_\beta$. 
Thus $\sigma_\alpha$ is the supremum of $\alpha$-indices. Any such $\sigma_{\alpha}$, like $\sigma$, is an admissible limit of admissible ordinals.

Suppose that $\nu$ is $\Pi_1^{L_{\omega_1}}$-definable. (There are unboundedly many such $\nu$ below $\tau$ by Lemma  \ref{tau is the sup of Pi1-definable ordinals}). 
Fix a $\Pi_1$-formula $\varphi(u)$ defining $\nu$. 
We shall define a $\Pi^1_1$ subset $A=A_\nu$ of $\WO$. 
$A$ will be bounded, since for all $x\in A$, $\alpha_x$ will be a $\bar{\nu}$-index for some $\bar{\nu}\leq\nu$ and hence $\alpha_x<\sigma_\nu$. 

For each $x\in \WO$, let $\nu_x$ denote the least ordinal $\bar{\nu}<\alpha_x$ with $L_{\alpha_x}\models \varphi(\bar{\nu})$, if this exists. 
Let $\psi(u)$ state that \emph{``$\nu_u$ exists and $\alpha_u$ is a $\nu_u$-index''}. 
Let $A$ denote the set of $x\in\WO$ which satisfy $\psi(x)$. 
Clearly $A$ is $\Pi^1_1$.

\begin{claim*} 
\label{lower bound for decision time} 
The decision time of $A_\nu$ equals $\sigma_\nu$. 
Furthermore, for any ittm that semidecides $A_\nu$, the order type of the set of halting times for real inputs is at least $\sigma_\nu$. 
\end{claim*} 
\begin{proof} 
The definition of $A_\nu$ yields an algorithm to semidecide $A_\nu$ in time $\sigma_\nu$. 
Now suppose that for some $\gamma<\sigma_\nu$, there is an ittm-program $p$ that semidecides $A$ with decision time $\gamma$. 
Let $g$ be $\Col(\omega,\gamma)$-generic over $L_{\sigma_\nu}$ in that it meets all dense sets of this partial order that are elements of $L_{\sigma_{\nu}}$.  Let $x_g\in L_{\sigma_\nu}[g]$ be a real coding $g$. 
 Such genericity preserves the admissibility of ordinals in the interval $(\gamma,\sigma_{\nu})$, in that for such ordinals $\tau$, $L_{\tau}[g]$, and {\em a fortiori} $L_{\tau}[x_{g}]$, is an admissible set. As we have observed $\sigma_{\nu}$ is a limit of admissibles, and thus $\gamma < \omega_1^{\ck,x_{g}}<\sigma_\nu$.  However $A$ is $\Sigma^1_1(x_g)$, since $x\in A$ holds if and only if there is a halting computation $p(x)$ of length at most $\gamma$. 
By Lemma \ref{effective Sigma11 boundedness}, $A$ is bounded by $\omega_1^{\ck,x_g}$. 
This contradicts the definition of $A$, as it is unbounded in $\sigma_\nu$. 

For the second sentence of the claim, construct a strictly increasing sequence of halting times of length $\sigma_\nu$ by $\Sigma_1$-recursion over $L_{\sigma_\nu}$. 
It is unbounded in $\sigma_\nu$ by the first claim, hence its length is $\sigma_\nu$. 
\end{proof} 
This proves Theorem \ref{supremum of countable semidecision times}. 
\end{proof}

\subsection{Semirecognisable reals} 
We have essentially completed the calculation of the supremum of semidecision times of singletons. 
The upper bound follows from Lemma \ref{versions of sigma} and the lower bound from Lemma \ref{recognisable reals appear quickly}. 

\begin{theorem} 
\label{supremum of semidecision times of semirecognisable singletons} 
The supremum of semidecision times of singletons equals $\sigma$. 
\end{theorem}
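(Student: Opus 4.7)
The plan is to prove the two inequalities separately, as suggested by the authors' remark that the upper bound follows from Lemma \ref{versions of sigma} and the lower bound from Lemma \ref{recognisable reals appear quickly}.

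For the upper bound, I would argue as follows. Suppose $\{x\}$ has semidecision time $\alpha$, witnessed by a program $p$ that halts on input $y$ if and only if $y=x$, with halting time $\alpha$ on $x$. Since $p$ halts on a unique input with a unique halting time, the ordinal $\alpha$ is the unique solution of the formula $\phi(\beta) \equiv \exists y\, (p(y)\downarrow \text{ at exactly time } \beta)$, which is $\Sigma_1^{L_{\omega_1}}$ (a halting computation of length $\beta$ can be coded in $L_{\beta^\oplus}$, and quantification over reals $y$ amounts to bounded quantification in $L_{\omega_1}$). By Lemma \ref{versions of sigma}, $L_\sigma \prec_{\Sigma_1} L_{\omega_1}$, so $\exists \beta\, \phi(\beta)$ holds in $L_\sigma$; by uniqueness (in $L_{\omega_1}$, and hence in $L_\sigma$), the witness must be $\alpha$ itself, so $\alpha<\sigma$.

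For the lower bound, I would reuse the construction from the proof of Theorem \ref{sigma upper set clockable bound}. Given any $\alpha<\sigma$, pick $\beta$ with $\alpha<\beta<\sigma$ such that some $\Sigma_1$-sentence $\varphi$ first holds in $L_\beta$. Exactly as in the proof of Theorem \ref{sigma upper set clockable bound}, the $<_L$-minimal code $x$ for $L_{\beta^\oplus}$ is recognisable (hence semirecognisable), yet $x\notin L_{\beta^\oplus}$. Now apply the contrapositive of Lemma \ref{recognisable reals appear quickly}\ref{recognisable reals appear quickly 1}, whose hypothesis is already stated for semirecognisers: any program $q$ semirecognising $x$ with halting time $\gamma$ on input $x$ must satisfy $x\in L_{\gamma^\oplus}$, so $\gamma\geq\beta$. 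Taking the infimum over semirecognising programs, the semidecision time of $\{x\}$ is at least $\beta>\alpha$; letting $\alpha$ vary shows the supremum is at least $\sigma$.

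There is no substantial obstacle here, since the hard work has been carried out in Theorem \ref{sigma upper set clockable bound} (construction of the witnessing recognisable reals) and in Lemma \ref{recognisable reals appear quickly}\ref{recognisable reals appear quickly 1} (which is stated in the semirecognisable generality we need). The only small subtlety to make sure of is that the $\Sigma_1$-definition of $\alpha$ in the upper-bound argument genuinely picks out a unique ordinal, which relies on $p$ semideciding a singleton; this is built into the setup. Combining the two bounds yields the claimed equality.
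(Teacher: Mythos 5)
Your proposal is correct and follows essentially the same route as the paper, which itself only remarks that the upper bound comes from Lemma \ref{versions of sigma} (the halting time of a semirecogniser is the unique solution of a $\Sigma_1$ formula over $L_{\omega_1}$, hence below $\sigma$) and the lower bound from Lemma \ref{recognisable reals appear quickly} applied to the recognisable reals $x\notin L_{\beta^\oplus}$ constructed in Theorem \ref{sigma upper set clockable bound}. The one point worth making explicit in your upper-bound argument is that the witness $y=x$ does lie in $L_{\omega_1}$, which is exactly what Lemma \ref{recognisable reals appear quickly}\ref{recognisable reals appear quickly 1} provides.
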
 

To see that this does not follows from Lemma \ref{sigma upper set clockable bound}, note that semirecognisable, but not recognisable reals exist by \cite[Theorem 4.5.4]{carl2019ordinal}. 
In fact, we shall obtain a stronger result via the next lemma. 

\begin{lemma} \
\label{semirecognisables are closed under writability} 
\begin{enumerate-(1)} 
\item 
\label{semirecognisables are closed under writability 1} 
If $x$ is  semirecognisable  and $y=_{\infty}x$, then $y$ is semirecognisable. 
\item 
\label{semirecognisables are closed under writability 2} 
If $x$ is a \emph{fast} real, that is $x\in L_{\lambda^{x}}$, then every $y=_{\infty} x$ is similarly fast.
\end{enumerate-(1)} 
\end{lemma}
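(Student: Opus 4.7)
For \ref{semirecognisables are closed under writability 1}, the plan is to combine the semirecognition algorithm for $x$ with the programs witnessing $y=_{\infty}x$. Fix $p$ semirecognising $x$, together with programs $q,r$ satisfying $q(x){\downarrow}y$ and $r(y){\downarrow}x$. On input $z$, I would run $r(z)$; if it halts with some output $w$, then run $p(w)$ to verify that $w=x$; then compute $v:=q(w){\downarrow}y$; and finally halt if and only if $z=v$. If $z=y$, all four stages succeed and the algorithm halts. If $z\neq y$, then either $r(z)$ diverges, or $r(z){\downarrow}w$ with $w\neq x$ (so $p(w)$ diverges), or $r(z){\downarrow}x$ but then $v=q(x)=y\neq z$. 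In every case the algorithm does not halt, so it semirecognises $y$.

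For \ref{semirecognisables are closed under writability 2}, the plan is to reduce to a constructibility identity via Lemma \ref{L_lambdax is writable-invariant}. From $y=_{\infty}x$ that lemma yields $L_{\lambda^{x}}[x]=L_{\lambda^{y}}[y]$. The key auxiliary claim is that when $x\in L_{\lambda^{x}}$, the relativised hierarchy degenerates to the pure $L$-hierarchy, i.e.\ $L_{\lambda^{x}}[x]=L_{\lambda^{x}}$. I would prove this by induction on levels $\beta\geq\alpha_{0}$, where $\alpha_{0}$ is least with $x\in L_{\alpha_{0}}$: once $L_{\beta}[x]=L_{\beta}$ and $x\in L_{\beta}$, the $x$-predicate is replaceable by $x$ as a parameter, giving $L_{\beta+1}[x]=L_{\beta+1}$, while the limit step is immediate from the inductive hypothesis. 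Combining the two identities gives $L_{\lambda^{y}}[y]=L_{\lambda^{x}}$, and equating ordinal heights forces $\lambda^{x}=\lambda^{y}$. Therefore $y\in L_{\lambda^{y}}[y]=L_{\lambda^{x}}=L_{\lambda^{y}}$, confirming that $y$ is fast.

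The main points requiring attention are, for \ref{semirecognisables are closed under writability 1}, ensuring that spurious halts of $r$ on inputs $z\neq y$ that happen to produce $x$ are filtered out --- this is precisely the role of the final comparison of $z$ against $q(x)$, since nothing in the hypothesis constrains the behaviour of $r$ off of the input $y$; and for \ref{semirecognisables are closed under writability 2}, justifying the identity $L_{\lambda^{x}}[x]=L_{\lambda^{x}}$ under the hypothesis $x\in L_{\lambda^{x}}$, which deserves explicit mention because the relativised hierarchy $L[x]$ is by definition built using $x$ as a predicate rather than as a parameter.
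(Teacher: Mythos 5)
Part \ref{semirecognisables are closed under writability 1} of your proposal is exactly the paper's argument: run $r$, verify the output against $p$, recompute $y$ via $q$, and compare with the input. The final comparison against $q(w)$ is indeed the essential filter, for precisely the reason you give, and your case analysis of the divergence behaviour is correct.

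For part \ref{semirecognisables are closed under writability 2} you follow the same route as the paper (which simply asserts $L_{\lambda^{x}}[x]=L_{\lambda^{x}}$ and concludes), but your proposed justification of that identity by level-by-level induction does not work: the base case $L_{\alpha_{0}}[x]=L_{\alpha_{0}}$ is false in general, and so is the statement $L_{\beta}[x]=L_{\beta}$ for all $\beta\geq\alpha_{0}$. The relativised hierarchy gets a head start, since $x$ enters $L_{\omega+1}[x]$ long before level $\alpha_{0}$; for example, if $x$ is (Turing equivalent to) Kleene's $\mathcal{O}$, then $\alpha_{0}=\omega_{1}^{\ck}+1$, yet $x^{(\omega)}\in L_{\omega+\omega+1}[x]\subseteq L_{\alpha_{0}}[x]$ while $x^{(\omega)}\notin L_{\alpha_{0}}=\mathrm{Def}(L_{\omega_{1}^{\ck}})$, as the latter contains only reals definable over $L_{\omega_{1}^{\ck}}$ at some finite quantifier level. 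The identity you need is nevertheless true and easily repaired: if $x\in L_{\gamma}$ with $\gamma<\lambda^{x}$, then by induction $L_{\beta}[x]\subseteq L_{\gamma+\beta+\omega}$ for every $\beta$ (the construction of $L_{\beta}[x]$ can be reproduced inside $L$ once $x$ is available as a parameter, at the cost of an ordinal shift), and since $\lambda^{x}$ is a limit of admissibles and hence closed under ordinal addition, $L_{\lambda^{x}}[x]\subseteq L_{\lambda^{x}}$; the reverse inclusion always holds. The equalities at the two \emph{limit} levels are what you should aim for, not equality at each successor level. With that repair, your deduction $L_{\lambda^{y}}[y]=L_{\lambda^{x}}[x]=L_{\lambda^{x}}$ via Lemma \ref{L_lambdax is writable-invariant}, hence $\lambda^{x}=\lambda^{y}$ and $y\in L_{\lambda^{y}}$, is correct and matches the paper.
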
 
\begin{proof} 
\ref{semirecognisables are closed under writability 1}: 
Suppose $x$ is semirecognisable via the program $p$. Let $q(x)\downarrow y$ and $r(y)\downarrow x$. 
The following program semirecognises $y$. 
On input $\bar y$, compute $r(\bar y)$ and if this halts with output $\bar x$, then compute $p(\bar x)$. If the latter halts 
(and so $\bar x = x$), we perform $q(\bar x)\downarrow y$ and check that $y= \bar y$. If so we halt with output $1$ and diverge otherwise. 

\ref{semirecognisables are closed under writability 2}: 
Note that $y=_\infty x $ implies $ \lambda^{y}=\lambda^{x}$. So $y\in L_{\lambda^{x}}[x] = L_{\lambda^{x}}$. 
\end{proof}

\begin{theorem} Let $x$ be any real. 
\label{semidecidable undecidable reals} 
\begin{enumerate-(1)} 
\item 
\label{semidecidable undecidable reals - part 1} 
No real in $L_{\Sigma^{x}}[x]\setminus L_{\lambda^{x}}[x]$ is recognisable.
\item 
\label{semidecidable undecidable reals - part 2} 
No real in $L_{\zeta^{x}}[x]\setminus L_{\lambda^{x}}[x]$ is semirecognisable. 
\item 
\label{semidecidable undecidable reals - semirecognisable}
Suppose that $y$ is both fast\footnote{For the definition of \emph{fast}, see Lemma \ref{semirecognisables are closed under writability}.} and semirecognisable, $x\leq
_{\infty}y$ and 
 $\lambda^{x}=\lambda^{y}$. 
 Then $x$ is semirecognisable. 
\item 
\label{semidecidable undecidable reals - part 4} 
All reals in $L_{\Sigma}\setminus L_{\zeta}$ are semirecognisable.
\end{enumerate-(1)} 
\end{theorem}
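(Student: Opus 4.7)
The plan is to dispose of parts~(1) and~(2) uniformly by a $\Sigma_1$-reflection argument along the relativised $\lambda^x$-$\zeta^x$-$\Sigma^x$-chain, to derive~(3) quickly from earlier lemmas, and to prove~(4) by a direct ittm construction, which I expect to be the main obstacle.

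For parts~(1) and~(2), let $p$ recognise (resp.\ semirecognise) $y$ and consider the parameter-free $\Sigma_1$-sentence
\[
\varphi \;:=\; \exists z\ p(z){\downarrow}1 \qquad\text{(resp.\ } \varphi := \exists z\ p(z){\downarrow}\text{).}
\]
Theorem~\ref{lambda-zeta-Sigma-theorem} relativised to $x$ yields the elementarity chain $L_{\lambda^x}[x]\prec_{\Sigma_1}L_{\zeta^x}[x]\prec_{\Sigma_2}L_{\Sigma^x}[x]$. Since $\zeta^x$ is admissible, $L_{\zeta^x}[x]$ is admissible, and since $\Sigma^x$ is a limit of admissibles, $L_{\Sigma^x}[x]$ is a union of admissible sets that are elements of it; in both cases the $\Sigma^1_1$-absoluteness principle recorded in the preliminaries applies. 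Under the respective hypothesis that $y$ lies in the model, it gives that $p(y){\downarrow}$ (resp.\ $p(y){\downarrow}1$) holds there, so $\varphi$ does too. As $\Sigma_1\subseteq\Sigma_2$, the sentence $\varphi$ reflects down the entire chain to $L_{\lambda^x}[x]$, producing a witness $z\in L_{\lambda^x}[x]$ with $p(z){\downarrow}$ there; a second application of $\Sigma^1_1$-absoluteness transfers $p(z){\downarrow}$ to $V$, and the uniqueness of the (semi)recognised singleton forces $z=y$, placing $y$ in $L_{\lambda^x}[x]$.

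Part~(3) reduces to Lemma~\ref{semirecognisables are closed under writability}(1) once I derive $x=_\infty y$ from the hypotheses. The fastness of $y$ gives $y\in L_{\lambda^y}$, which combined with $\lambda^y=\lambda^x$ yields $y\in L_{\lambda^x}\subseteq L_{\lambda^x}[x]$; Lemma~\ref{L_lambdax is writable-invariant} then gives $y\leq_\infty x$, which together with $x\leq_\infty y$ produces $x=_\infty y$. Lemma~\ref{semirecognisables are closed under writability}(1), which is symmetric in its two reals via $=_\infty$, now transports semirecognisability from $y$ to $x$.

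For part~(4), let $y\in L_\Sigma\setminus L_\zeta$. The elementarity $L_\zeta\prec_{\Sigma_1}L_\Sigma$ (inherited from $\prec_{\Sigma_2}$) rules out that $y$ is the unique solution in $L_\Sigma$ of any $\Sigma_1$-formula with parameters in $L_\zeta$: otherwise $\Sigma_1$-elementarity combined with upward absoluteness would locate the unique solution in $L_\zeta$ and identify it with $y$, contradicting $y\notin L_\zeta$. Hence the minimal ordinal parameter in any $\Sigma_1$-definition of $y$ over $L_\Sigma$ falls into $[\zeta,\Sigma)$. Fix such a defining formula $\varphi_y(u,v)$ with canonical parameter $\bar\gamma\in[\zeta,\Sigma)$. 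The semirecognising algorithm on input $\bar y$ runs the theory machine and the $L$-construction in parallel, so the $\Sigma_2$-theories of successive $L_\alpha$ are available on a work tape; their stabilisation localises the transition past $\zeta$. Thereafter, it searches stage by stage for ordinals $\gamma$ with $L_\alpha\models\varphi_y(\bar y,\gamma)$ and halts upon success, so that for $\bar y=y$ the pair $(y,\bar\gamma)$ is eventually detected. The principal obstacle is to choose $\varphi_y$ carefully enough that only $(y,\bar\gamma)$ triggers halting; this is handled by encoding enough of $y$'s canonical production (the specific accidentally-writing program and the minimal distinguishing stage past $\zeta$) into $\varphi_y$, so that the non-eventual-writability of $y$ pins the halting condition to the singleton $\{y\}$.
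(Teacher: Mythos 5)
Your part (3) is exactly the paper's argument and is fine. But parts (1), (2) and (4) have genuine problems. For (1) and (2), the whole reflection argument hinges on the opening claim that $p(y){\downarrow}1$ (resp.\ $p(y){\downarrow}$) holds \emph{inside} $L_{\Sigma^{x}}[x]$ (resp.\ $L_{\zeta^{x}}[x]$). That statement is not $\Sigma^1_1$ -- it asserts the existence of a halting computation of some countable ordinal length, so it is $\Sigma^1_2$ (equivalently $\Sigma_1$ over the $L[y]$-hierarchy) -- and the absoluteness principle from the preliminaries does not apply. What you actually need is that the halting time of $p(y)$ lies below $\Sigma^{x}$ (resp.\ $\zeta^{x}$), i.e.\ that the computation itself is an element of the model. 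For (2) this can be rescued, since $y$ eventually writable from $x$ gives $L_{\lambda^{y}}[y]\subseteq L_{\zeta^{x}}[x]$, but you would have to prove that. For (1) the needed bound fails in exactly the relevant generality: part (4) of this very theorem rests on the fact that $\lambda^{y}>\Sigma$ for $y\in L_{\Sigma}\setminus L_{\zeta}$, so the halting computation of a putative recogniser on input $y$ may be far too long to live in $L_{\Sigma^{x}}[x]$, and your chain of reflections never gets started. The paper avoids this entirely with an algorithmic argument: since $y$ is accidentally (resp.\ eventually) writable from $x$, dovetail a universal machine producing candidate reals $z$ with runs of $p(z)$ and halt with output $z$ once $p$ accepts; the combined machine halts -- at some countable time, possibly far beyond $\Sigma^{x}$, which is harmless -- and outputs $y$, whence $y\in L_{\lambda^{x}}[x]$.

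Part (4) is where the real content lies, and your proof does not close: you explicitly defer ``the principal obstacle'' of arranging that only $y$ triggers halting, and the surrounding sketch gives no mechanism for a machine on an arbitrary input $\bar y$ to detect, in a halting manner, the ``transition past $\zeta$'' from the theory machine's (non-halting) output. The paper's route is quite different and reduces (4) to (3): first show that every $x\in L_{\Sigma}\setminus L_{\zeta}$ satisfies $\lambda^{x}>\zeta$ and then $\lambda^{x}>\Sigma$ (by $\Sigma_1$-reflecting ``$x\in L$'' into $L_{\lambda^{x}}[x]$, and then reflecting the existence of a $\Sigma_2$-extendible pair); then take $y$ to be the $L$-least code of $L_{\Sigma}$, which is recognisable outright from first-order properties of $L_{\Sigma}$, verify that $y$ is fast and $x=_{\infty}y$ with $\lambda^{x}=\lambda^{y}$, and apply part (3). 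I'd recommend abandoning the direct machine construction for (4) and proving the $\lambda^{x}>\Sigma$ computation instead.
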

\begin{proof} 
\ref{semidecidable undecidable reals - part 1}
Suppose that $p$ recognises $y\in L_{\Sigma^{x}}[x]$. 
We run a universal ittm $q$ with oracle $x$ and run $p(z)$ on each tape contents $z$ produced by $q$. 
Once $p$ is successful, we have found $y$ and shall write it on the output tape and halt.  Hence  $y\in L_{\lambda^{x}}[x]$.

\ref{semidecidable undecidable reals - part 2}
Suppose that $p$ eventually writes $y$ from $x$ and $q$ semirecognises $y$. 
We run $p(x)$ and in parallel $q(z)$, where $z$ is the current content of the output tape of $p$. 
Whenever the latter $z$ changes, the run of $q(z)$ is restarted. 
When $q(z)$ halts, output $z$ and halt. 
To see that this algorithm writes $y$, note that the output of $p(x)$ eventually stabilises at $y$, so $q(y)$ is run and $y$ is output when this halts. Hence $y\in L_{\lambda^{x}}[x]$.

\ref{semidecidable undecidable reals - semirecognisable} 
Since $y$ is fast, we have $y\in L_{\lambda^{y}}$; as $\lambda^{y}=\lambda^{x}$, it follows that $y\in L_{\lambda^{x}}$. Then $y\leq_{\infty}x$ by Lemma \ref{L_lambdax is writable-invariant}, and since $x\leq_{\infty}y$, we get $x=_{\infty}y$. By part (1) of Lemma \ref{semirecognisables are closed under writability}, $x$ is semirecognisable. 

\ref{semidecidable undecidable reals - part 4}
Take any $x\in L_\Sigma \setminus L_\zeta$. 

We first show that $\lambda^{x}>\zeta$.\footnote{This argument is from \cite[Theorem 2.6 (3)]{W}.}  
Assume $\lambda^{x}\leq\zeta$. 
Since $\Sigma\leq\Sigma^{x}$, we have 
$L_{\Sigma^{x}}[x]\models x\in L$. 
By $\Sigma_1$-reflection $L_{\lambda^{x}}[x]\models x\in L$ and hence $x\in L_\zeta$. 
But this contradicts the choice of $x$. 
 
We now show that $\lambda^{x}>\Sigma$. 
Since $L_{\lambda^{x}}[x]\prec_{\Sigma_{1}}L_{\Sigma^{x}}[x]$ and 
 $L_{\zeta}$ is the maximal proper  $\Sigma_{1}$-substructure of $L_{\Sigma}$, 
 we must have $\Sigma<\Sigma^{x}$. 
 The existence of a $\Sigma_2$-extendible pair reflects to $L_{\lambda^x}$ and hence $\Sigma<\lambda^x$. 

  Let $y$ denote the $<_L$-least code of $L_{\Sigma}$. 
  Clearly $y$ is recognisable via first order properties of $L_\Sigma$. 
We claim that $y$ is fast. 
Since $y\in L_{\lambda^{x}}[x]$ and $x\in L_{\lambda^{y}}[y]$, we have ${y}=_{\infty}{x}$ and $\lambda^x=\lambda^y$ by Lemma \ref{L_lambdax is writable-invariant}. 
Thus $\lambda^y>\Sigma$ by the previous argument and $y \in L_{\Sigma+1}\subseteq  L_{\lambda^{y}}$ as required. 
The result then follows from \ref{semidecidable undecidable reals - semirecognisable}.
   \end{proof}

 We remark that some requirement on $\lambda^{x}$ is needed in \ref{semidecidable undecidable reals - semirecognisable}.
 To see this, take any Cohen generic $x \in L_{\lambda^{y}}$ over $L_{\Sigma}$, where $y$ denotes the $<_L$-least code for $\Sigma$. (In this case, we have $\lambda^x=\lambda<\lambda^y$, so that the condition $\lambda^x=\lambda^y$ is violated.) 
Since $y\in L_{\Sigma+1}\subseteq L_{\lambda^y}$, $y$ is fast. Moreover, since $x\in L_{\lambda^{y}}$, we have $x\leq_{\infty}y$. Finally, $y$ is recognisable (and hence, a fortiori, semirecognisable) by testing whether it is the $<_L$-minimal code of the minimal $L$-level that has a proper $\Sigma_2$-elementary submodel.
Thus, the other assumptions of Theorem \ref{semidecidable undecidable reals - semirecognisable} are satisfied. 
We claim that $x$ is not semirecognisable by a program $p$. 
Otherwise $L_{\Sigma}[x]\models p(x){\downarrow}^\alpha$ for some $\alpha < \lambda^{x}$. 
This statement is forced over $L_{\Sigma}$ for the Cohen real, and we can take two incompatible Cohen reals over $L_\Sigma$ for which $p$ would have to halt. 

\subsection{Cosemirecognisable reals}

Here we study semidecision times for the complements of cosemirecognisable reals. 
We shall call them \emph{cosemidecision times}. 

We determine the supremum of cosemidecision times of singletons. 
To this end, we shall need an analogue to Lemma \ref{recognisable reals appear quickly}. 
It will be used to show that
any countable cosemidecision time of a program $p$ is strictly below $\sigma$.

\begin{lemma} 
\label{cosemirecognisable reals appear quickly} 
If $p$ cosemirecognises $x$ and $p(x)$ has a final loop of length  $\leq \alpha$, then 
\begin{enumerate-(1)} 
\item 
\label{cosemirecognisable reals appear quickly 1} 
$\alpha < \sigma$. 
\item 
\label{cosemirecognisable reals appear quickly 2} 
$x\in L_{\alpha^\oplus}$. 
\end{enumerate-(1)} 
\end{lemma}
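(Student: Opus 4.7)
Write $\beta$ for the period of the final loop of $p(x)$, so $\beta\leq\alpha$ by hypothesis. I will prove the (slightly stronger) statements that $\beta<\sigma$ and $x\in L_{\alpha^\oplus}$; both conclusions of the lemma then follow immediately.

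For (1), the idea is that, since $p$ cosemirecognises $x$, the real $x$ is the unique input on which $p$ fails to halt, so $\beta$ coincides with the least $\beta'$ for which some $z$ has $p(z)$ possessing a final loop of period $\beta'$. I will express the latter existential as a $\Sigma^1_1$-condition in any real code for $\beta'$: witness it by existentially quantifying over a real coding an ordinal $\gamma=\zeta+\beta'$ (where the pre-loop length $\zeta$ is unconstrained) together with a real encoding a $\gamma$-indexed snapshot sequence of $p$ on input $z$, subject to $\Delta^1_1$ constraints of transition-rule compliance and of the final-loop condition being met on the terminal $\beta'$-segment. Passing through the standard $\Sigma^1_1/\Sigma_1^{L_{\omega_1}}$-correspondence then renders $\beta$ itself $\Sigma_1^{L_{\omega_1}}$-definable, so $\beta<\sigma$.

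For (2), I will mimic the proof of Lemma \ref{recognisable reals appear quickly}\ref{recognisable reals appear quickly 1}. Set $M=L_{\alpha^\oplus}$, take $g\in V$ a $\Col(\omega,\alpha)$-generic over $M$ sufficiently generic that $M[g]$ remains admissible (as in the cited application of \cite[Theorem 10.17]{mathias2015provident}), and pick $y\in M[g]\cap\WO$ coding $\alpha$. Define the $\Sigma^1_1(y)$-predicate
\[
R(z) \iff \text{``}p(z)\text{ has a final loop of period at most }\alpha\text{''}
\]
using the snapshot-sequence encoding sketched above (with $y$ bounding the permissible period). I then verify $R=\{x\}$ directly: the genuine $\beta$-loop of $p(x)$ witnesses $R(x)$, while for any $z\neq x$ the computation $p(z)$ halts and hence admits no infinite snapshot sequence at all, so $R(z)$ fails. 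The effective $\Sigma^1_1$ perfect-set theorem \cite[III Thm.\ 6.2]{Sa90} relativised to $y$, applied exactly as in Lemma \ref{recognisable reals appear quickly}\ref{recognisable reals appear quickly 1}, then forces $x\in L_{\omega_1^{y,\ck}}[y]\subseteq M[g]$, and homogeneity of $\Col(\omega,\alpha)$ pulls $x$ down to $M=L_{\alpha^\oplus}$ cell-by-cell, by asking for each $n\in\omega$ whether the empty condition forces $n$ to lie in the unique solution of $R$.

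The principal obstacle is that the pre-loop segment of $p(x)$'s computation may have ordinal length $\zeta$ far exceeding $\alpha$, so the $\Sigma^1_1(y)$-description of $R$ is forced to existentially quantify a potentially enormous snapshot prefix. This causes no logical difficulty — any countable snapshot sequence is coded by some real, and the ittm transition check is Borel — but it is the reason one cannot simply read $x$ off of the short loop data, and why the extraction of $x$ into the admissible hull of a code for $\alpha$ must proceed through uniqueness of $R$ and the effective perfect-set theorem rather than through any direct forcing simulation of $p(x)$.
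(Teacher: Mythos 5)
Your proof of part \ref{cosemirecognisable reals appear quickly 2} does not go through, and the failure is exactly at the point you dismiss in your final paragraph. You read ``final loop of length $\leq\alpha$'' as bounding only the \emph{period} of the loop and leave the pre-loop segment unconstrained; your predicate $R(z)$ must then existentially quantify over a snapshot sequence indexed by an arbitrary countable ordinal $\gamma=\zeta+\beta'$, i.e.\ over a real coding $\gamma$ \emph{together with the assertion that this real is a wellorder}. That assertion is $\Pi^1_1$, so your $R$ is $\Sigma^1_2(y)$, not $\Sigma^1_1(y)$ (dropping wellfoundedness admits spurious pseudo-computations along illfounded orders), and the effective $\Sigma^1_1$ perfect set theorem no longer applies. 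The intended reading, which the paper's proof and its application in Theorem \ref{supremum of cosemidecision times of reals} make explicit (there $\alpha=\Sigma^x$ and ``such a loop occurs by time $\Sigma^z$''), is that $\alpha$ bounds the \emph{whole} computation up to and including one period of the final loop; then the entire snapshot sequence is indexed along the single fixed code $y$ for $\alpha$, $R$ is genuinely $\Sigma^1_1(y)$, and the rest (admissible generic extension, uniqueness of the solution, perfect set theorem, homogeneity of $\Col(\omega,\alpha)$) runs exactly as in Lemma \ref{recognisable reals appear quickly}, as you describe.

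Your ``slightly stronger'' version of \ref{cosemirecognisable reals appear quickly 2} is in fact false, which shows the gap is not merely cosmetic: let $x$ be a recognisable real with $x\notin L_{\omega_1^{\ck}}$ (a lost melody), recognised by $r$, and let $p(z)$ run $r(z)$, halt if the output is $0$, and otherwise erase its work and output tapes and enter a trivial busy loop. Then $p$ cosemirecognises $x$ and $p(x)$ ends in a final loop of period $\omega$, yet $x\notin L_{\omega^\oplus}=L_{\omega_1^{\ck}}$. Part \ref{cosemirecognisable reals appear quickly 1} is also routed through the wrong correspondence: ``some $z$ has $p(z)$ entering a final loop'' quantifies over computations of arbitrary countable length and is therefore $\Sigma_1$ over $L_{\omega_1}$, equivalently $\Sigma^1_2$ -- not $\Sigma^1_1$ in a code for the period. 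The paper's argument for \ref{cosemirecognisable reals appear quickly 1} is simply that this true $\Sigma_1$ statement reflects to $L_\sigma$, whence by uniqueness $x\in L_\sigma$ and the loop, with its entire length, lies below $\sigma$.
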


\begin{proof} 
\ref{cosemirecognisable reals appear quickly 1} 
Suppose that $p$ cosemirecognizes $x$.  Then $x$ is the unique $y$ such that $p(y)$ loops. 
The statement that $p(y)$ loops for some $y$ is a true $\Sigma_1$-statement and it therefore holds in $L_\sigma$. 
By uniqueness, $x\in L_\sigma$, and further the length of that final loop is some $\alpha <\sigma$.

\ref{cosemirecognisable reals appear quickly 2} 
Let $M=L_{\alpha^\oplus}$ and take any $\Col(\omega,\alpha)$-generic filter $g\in V$ over $M$. 
Since $\Col(\omega,\alpha)$ is a set forcing in $M$, $M[g]$ is admissible. 
Let $y\in M[g]\cap \WO$ be a real coding $\alpha$. 
As in the proof of Lemma \ref{recognisable reals appear quickly}, it suffices to show that $x\in M[g]$.  Following that proof, set $R(z)$ if ``$\exists h\ [ h $ {\em  codes a  sequence of computation snapshots in $p(z)$, along the ordering $y$, 
of a 
computation of length 
$\alpha$ with a final loop}]''.
The rest of the agument is identical as $z=x$ is the only possible solution to $R(z)$.
\end{proof}

\begin{theorem} 
\label{supremum of cosemidecision times of reals} 
The supremum of cosemidecision times of 
reals equals $\sigma$. 
\end{theorem}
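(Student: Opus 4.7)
The plan is to split into upper and lower bounds. Throughout, given a cosemirecognising program $p$ for $x$, Lemma \ref{cosemirecognisable reals appear quickly} furnishes an $\alpha<\sigma$ with $x\in L_{\alpha^\oplus}$; since $\sigma$ is an admissible limit of admissibles, $\alpha^\oplus<\sigma$ as well.

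For the upper bound I would construct a cosemirecognising program $p'$ for $x$ of decision time at most $\alpha^\oplus$, following the template of the proof of Theorem \ref{sigma upper set clockable bound}. On input $y$, $p'$ runs $p(y)$ in parallel with a helper $q(y)$ that dovetails all ittm-programs on input $y$ in order to collect $y$-writable ordinals; for each such writable $\gamma$, $q$ builds $L_\gamma$, tests whether $L_\gamma$ is admissible, and in that case computes $S_\gamma=\{z\in L_\gamma: L_\gamma\models\lnot\, p(z){\downarrow}^{\leq\gamma}\}$, halting iff $|S_\gamma|=1$ and $y$ differs from the unique element. The decisive calculation is $\Sigma_1$-replacement inside admissible $L_\gamma$ applied to the $\Sigma_1$-definable halting-time function on $L_\gamma\cap 2^\omega\setminus\{x\}$: this bounds those halting times strictly below $\gamma$, so $S_\gamma=\emptyset$ whenever $\gamma$ is admissible with $x\notin L_\gamma$, while $S_\gamma=\{x\}$ as soon as $\gamma=\alpha^\oplus$; thus $q$ never commits wrongly. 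A dichotomy on $\lambda^y$ then finishes: for $\lambda^y\geq\alpha^\oplus$ the helper $q(y)$ reaches $\gamma=\alpha^\oplus$ and halts in ${<}\alpha^\oplus$ steps whenever $y\neq x$, while for $\lambda^y<\alpha^\oplus$ the program $p(y)$ itself halts in ${<}\lambda^y$ steps whenever $y\neq x$.

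For the lower bound, fix $\beta<\sigma$; the lower bound proof of Theorem \ref{sigma upper set clockable bound} produces a recognisable (hence cosemirecognisable) real $x\notin L_{\beta^\oplus}$. Suppose for contradiction that the cosemidecision time of $\{x\}$ is some $\delta\leq\beta$ witnessed by a program $p$ with $p(y){\downarrow}^{\leq\delta}$ for all $y\neq x$. Then $V\setminus\{x\}=\{y:p(y){\downarrow}^{\leq\delta}\}$ is $\Sigma^1_1$ in any real coding $\delta$, so $\{x\}$ is a $\Pi^1_1$ singleton in such a parameter; the forcing-and-homogeneity argument of the proof of Lemma \ref{recognisable reals appear quickly}\ref{recognisable reals appear quickly 1}---now invoking the Kreisel--Spector theorem that $\Pi^1_1$-singletons are $\Delta^1_1$ in their parameter, in place of the effective $\Sigma^1_1$ Perfect Set Theorem---places $x$ in $L_{\delta^\oplus}\subseteq L_{\beta^\oplus}$, contradicting the choice of $x$. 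Letting $\beta\to\sigma$ completes the lower bound.

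The hardest point will be the identification step in the upper bound: the replacement calculation must be made precise so that at admissible $\gamma<\alpha^\oplus$ with $x\notin L_\gamma$ the set $S_\gamma$ is genuinely empty and at $\gamma=\alpha^\oplus$ it is exactly $\{x\}$, with no room for spurious unique candidates at intermediate stages, and the overhead of successively constructing the $L_\gamma$ inside $q$ must be kept within the $\alpha^\oplus$ budget by the quick writing theorem.
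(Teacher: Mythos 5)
Both halves of your argument contain genuine gaps.

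\textbf{Upper bound.} The claim that $\Sigma_1$-replacement in an admissible $L_\gamma$ bounds the halting times of $p(z)$ for $z\in L_\gamma\cap 2^\omega\setminus\{x\}$ strictly below $\gamma$ is false, and with it the assertion that $S_\gamma=\emptyset$ whenever $\gamma$ is admissible and $x\notin L_\gamma$. Replacement only applies to a function that is total \emph{as evaluated in $L_\gamma$}, i.e.\ whose witnessing halting computations lie in $L_\gamma$; but ``$p(z){\downarrow}$'' is a $\Sigma_1$ fact that need not reflect to $L_\gamma$ for $\gamma<\sigma$. Concretely, for $\gamma=\omega_1^{\ck}$ and $z$ recursive one can have $p(z)$ halting only at some time in $[\omega_1^{\ck},\lambda)$, so $z\in S_{\omega_1^{\ck}}$ even though $x\notin L_{\omega_1^{\ck}}$. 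If such a spurious $S_\gamma$ happens to be a singleton, your $p'$ halts on input $x$ and is no longer a cosemirecogniser. This is exactly the difficulty the paper's proof is designed around: its helper $q(y)$ does not test ``not yet halted by $\gamma$'' but searches (inside the theory machine's output) for a \emph{positive, verifiable} $\Sigma_1$ witness of divergence, namely a pair of $y$-writable reals coding some $z$ and some $\alpha$ such that $p(z)$ has a final loop of length at most $\alpha$; since $x$ is the unique $z$ for which $p(z)$ loops, no spurious candidate can arise, and Lemma \ref{cosemirecognisable reals appear quickly} guarantees the witness appears by time $(\Sigma^x)^\oplus<\sigma$.

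\textbf{Lower bound.} The ``Kreisel--Spector theorem'' you invoke does not exist in the form you need: it is $\Sigma^1_1$ singletons whose unique members are $\Delta^1_1$ in the parameter (via the effective perfect set theorem, which is what Lemma \ref{recognisable reals appear quickly} actually uses); $\Pi^1_1$ singletons need not be --- Kleene's $\mathcal{O}$ is a $\Pi^1_1$ singleton that is not hyperarithmetic, and $\Pi^1_1$ singletons are cofinal in $L_\sigma$. Knowing only that $\{x\}$ is $\Pi^1_1$ in a code for $\delta$ therefore gives no bound of the form $x\in L_{\delta^\oplus}$, and your contradiction evaporates. The paper's lower bound is genuinely different: assuming $\beta<\sigma$ bounds all cosemidecision times, it takes a recognisable $x_\beta$ (decision time $\alpha$) and a recognisable $x_\gamma$ with $\gamma\ge(\alpha+\beta)^\oplus$, and shows that a cosemirecogniser $p$ of $x_\beta\oplus x_\gamma$ with decision time below $\beta$ could be \emph{timed out}: extract the candidate copy of $x_\beta$ from the input $y$, verify it in $\le\alpha$ steps, then run $p(x_\beta\oplus y)$ for $\beta$ steps and halt iff it has not yet halted. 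This semirecognises $x_\gamma$ in $\le\alpha+\beta+1$ steps, contradicting Lemma \ref{recognisable reals appear quickly} since $x_\gamma\notin L_{(\alpha+\beta+1)^\oplus}$. You would need to replace your basis-theorem step with an argument of this kind.
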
 
\begin{proof} 
We first show that $\sigma$ is an upper bound. 
Suppose that $p$ cosemirecognises $x$. 
We define a new program $r$ which will cosemirecognize $x$ in less than $\sigma$ steps. The program $r$ will work on input $y$ by simultaneously running $p$ and the following program $q$, and halting  as soon as $p(y)$ or $q(y)$ halts. 
The definition of $q(y)$ is based on the machine considered in \cite{friedman2007two}, 
which writes the $\Sigma_2(y)$-theories of $J_\alpha[y]$ in its output, successively for $\alpha$. 
Note that the $\Sigma_2(y)$-theory of $J_\alpha[y]$ appears in step $\omega^2\cdot (\alpha +1)$. 
 (This is the reason for the choice of this specific program.)  
$q(y)$ searches within these theories for two writable reals relative to $y$: a real $z$ and a real coding an ordinal $\alpha$ such that $p(z)$ has a final loop of length at most $\alpha$ (in particular, it does not converge). 
Note that such a loop occurs by time $\Sigma^z$, if it occurs at all (see \cite[Main Proposition]{welch2000length} or \cite[Lemma 2]{welch2009characteristics}). If such reals are found, we check whether $z=y$; if that is the case, $r$ runs into a loop. Otherwise, $r$ halts.

It is not hard to see that $r$ cosemirecognises $x$: If $y=x$, then $p(y)$ will diverge. Moreover, $q(y)$ will either (i) eventually produce $x$ and a theory witnessing the fact that $p(x)$ loops, find that $x=y$ and diverge, or (ii) never produce such a theory and thus diverge while looking for it; in both cases $q(y)$ diverges. On the other hand, if $y\neq x$, then, by definition of $p$, $p(y)$, and thus $r(y)$, will halt.

It now suffices to show that the semidecision time of $r$ is at most $(\Sigma^x)^{\oplus}$, which is smaller than $\sigma$. 
Since $r(x)$ diverges, we can assume that $y\neq x$. 
We now consider two cases. 

First, if $\lambda^y\leq\Sigma^x$, then $p(y)$ halts at time $\Sigma^x$ or before. Therefore, $r(y)$ will also halt in $<(\Sigma^x)^{\oplus}$ many steps. 

Now suppose that $\lambda^y>\Sigma^x$. Since $\lambda^{y}$ is a limit of admissible ordinals, it follows that $\lambda^{y}>(\Sigma^x)^{\oplus}$. 
By Lemma \ref{cosemirecognisable reals appear quickly}, $x\in L_{(\Sigma^x)^\oplus}$. 
By the definition of $q$, the statement \emph{``there exists $z$ such that the length of $p(z)$'s loop is  $\leq \alpha$''} appears in the computation of $q(y)$ in strictly less than $(\Sigma^x)^\oplus$ steps. By definition of $p$, we will have $z=x$, and since we are assuming $y\neq x$, we will have $y\neq z$, so that, by definition of $r$, the computation $r(y)$ again halts in $<(\Sigma^x)^{\oplus}$ many steps. 


It remains to show that $\sigma$ is minimal. 
Towards a contradiction, suppose that $\beta<\sigma$ is a strict upper bound for the cosemidecision times. 
We can assume that $x_\beta$ is recognisable, for instance by taking $\beta$ to be an index. 
Suppose that $x_\beta$ is recognised by an algorithm with decision time $\alpha$. 
Note that $\alpha<\sigma$ by Theorem \ref{sigma upper set clockable bound}. 
Take $\gamma\geq (\alpha+\beta)^\oplus$ such that $x_\gamma$ is recognisable. 
Then $x_\gamma\oplus x_\beta$ is recognisable.

We claim that $x_\beta\oplus x_\gamma$ is not cosemirecognisable by an algorithm with decision time strictly less than $\beta$. 
So suppose that $p$ such an algorithm. 
We shall describe an algorithm $q$ that semirecognises $x_\gamma$ in at most $\alpha+\beta+1$ steps. 
This contradicts Lemma \ref{recognisable reals appear quickly}. 
Note that $x_\beta$ is coded in $x_\gamma$ by a natural number $n$. 
The algorithm $q$ extracts the real $x$ coded by $n$ from the input $y$. 
It then decides whether $x= x_\beta$, taking at most $\alpha$ steps, and diverges if $x\neq x_\beta$. 
If $x=x_\beta$, we run $p(x\oplus y)$ for $\beta$ steps and let $q$ halt if and only if $p(x\oplus y)$ fails to halt before time $\beta$. 
Then $q(y)$ halts if and only if $y=x_\gamma$. 
\end{proof} 


The previous result would follow from Theorem \ref{sigma upper set clockable bound} if every cosemirecognisable real were recognisable. 
However, the next result disproves this and thus answers \cite[Question 4.5.5]{carl2019ordinal}. 


\begin{theorem} 
The cosemirecognisable, but not recognisable, reals appear cofinally in $L_\sigma$, that is, their $L$-ranks are cofinal in $\sigma$.
\end{theorem}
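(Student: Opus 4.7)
The plan is to exhibit, for each $\alpha < \sigma$, a cosemirecognisable but not recognisable real of $L$-rank at least $\alpha$. Since every cosemirecognisable real lies in $L_\sigma$ by Lemma \ref{cosemirecognisable reals appear quickly}, this delivers cofinality in $\sigma$. The strategy is to produce such a real relative to a recognisable oracle of arbitrarily large $L$-rank and then absorb the oracle.

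I would first note that recognisable reals occur with $L$-ranks cofinal in $\sigma$: given $\alpha < \sigma$, Lemma \ref{versions of sigma} supplies a $\Sigma_1^{L_{\omega_1}}$-index $\gamma \in (\alpha, \sigma)$, and the $<_L$-least code $z = x_\gamma$ of $L_\gamma$ is recognisable via the $\mathsf{KP} + V = L + \varphi$ argument in the proof of Theorem \ref{sigma upper set clockable bound}, with $\rank_L(z) \geq \gamma$.

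Next I would construct, relative to $z$, a real $w$ whose singleton $\{w\}$ is cosemidecidable but not decidable by $z$-oracle ittms. The natural target is a $\Pi_1^{L_{\omega_1}[z]}$-singleton that fails to be $\Sigma_1^{L_{\omega_1}[z]}$-definable: the $\Pi_1$-definition directly yields a cosemi-decision via the loop-detection scheme in the proof of Theorem \ref{supremum of cosemidecision times of reals}, while failure of $\Sigma_1$-definability precludes recognition. An explicit candidate is a code for the least $z$-admissible above a distinguished $\Sigma_1^{L_{\omega_1}[z]}$-index, arranged so that any $\Sigma_1$-definition of $\{w\}$ would yield a $\Sigma_1$-definition of this admissible, contradicting its choice; alternatively one diagonalises against all $z$-oracle recognisers, dualising \cite[Theorem 4.5.4]{carl2019ordinal}.

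Setting $x = z \oplus w$, cosemirecognisability of $x$ in $V$ is witnessed by the program which, on input $y = y_0 \oplus y_1$, runs the $V$-recogniser for $z$ on $y_0$, halting if $y_0 \neq z$, and otherwise runs the $z$-relative cosemirecogniser of $w$ on $y_1$, halting iff the latter does. Non-recognisability in $V$ follows since any $V$-recogniser $q$ for $x$ would produce a $z$-oracle recogniser for $w$ via $u \mapsto q(z \oplus u)$, contradicting the construction of $w$. As $\rank_L(x) \geq \rank_L(z) > \alpha$, one obtains cosemirecognisable non-recognisable reals at cofinal $L$-ranks below $\sigma$. The principal obstacle is the previous step: the $\Sigma_1$-$\Pi_1$ asymmetry in the $z$-relativised $L$-hierarchy must be exploited so that $w$ escapes every $\Sigma_1^{L_{\omega_1}[z]}$-definition while remaining $\Pi_1^{L_{\omega_1}[z]}$-definable, which requires a careful dualisation rather than a naive swap of halting and looping.
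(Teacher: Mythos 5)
Your overall architecture (find recognisable oracles $z$ of cofinal $L$-rank, build a cosemirecognisable non-recognisable $w$ relative to $z$, then absorb the oracle into $x=z\oplus w$) is reasonable, but the proof has a genuine gap precisely at the step you yourself flag as the principal obstacle: you never actually construct $w$, and the bridge you propose for its cosemirecognisability does not work as stated. The claim that a $\Pi_1^{L_{\omega_1}[z]}$-definition of $\{w\}$ ``directly yields a cosemi-decision via the loop-detection scheme'' is unjustified: if $\{w\}$ is $\Pi_1$ over $L_{\omega_1}[z]$, then $u\neq w$ is a $\Sigma_1^{L_{\omega_1}[z]}(u)$ (equivalently $\Sigma^1_2$) fact, and an ittm on input $u$ can only search for witnesses that appear inside $L_{\Sigma^{u\oplus z}}[u\oplus z]$; a general true $\Sigma_1^{L_{\omega_1}}$ statement need not have a witness there, so $\Pi_1$-singletons are not automatically cosemirecognisable. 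The loop-detection argument in Theorem \ref{supremum of cosemidecision times of reals} starts from an already given cosemirecogniser $p$ and bounds its semidecision time; it does not convert a $\Pi_1$ definition into a cosemirecogniser. Likewise, ``$w$ is not $\Sigma_1^{L_{\omega_1}[z]}$-definable'' does not by itself preclude recognisability; non-recognisability has to come from a machine-theoretic fact. So the heart of the theorem is missing from the proposal.

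The paper's proof avoids relativised definability entirely and is worth comparing. Take an index $\xi$, let $x=x_\xi$ and $y=x_{\lambda^x}$, the $L$-least code of $\lambda^x$. Then $y\in L_{\zeta^x}\setminus L_{\lambda^x}[x]$, so by Theorem \ref{semidecidable undecidable reals} it is not even semirecognisable, \emph{a fortiori} not recognisable; and since $\lambda^{x_\xi}\geq\xi$ with $\xi$ ranging over indices, these reals have $L$-ranks cofinal in $\sigma$. Cosemirecognisability is then shown by an explicit algorithm exploiting the $\lambda$-$\zeta$-$\Sigma$ characterisation (Theorem \ref{lambda-zeta-Sigma-theorem}): on input $z$, halt as soon as $z$ is seen to fail one of the properties pinning down the $L$-least code of $L_{\lambda^x}$ --- failing to code a wellfounded $L_\alpha$, coding an $\alpha$ with a $\Sigma_2$-extendible initial segment or a proper $\Sigma_1$-substructure, failing to be $L$-least, or some program halting beyond $\alpha$. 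Each of these failures is witnessed by an event an ittm can actually detect, which is exactly the content your generic ``$\Pi_1$ implies cosemirecognisable'' step is missing. To rescue your approach you would need to replace your step 2 by a concrete construction of this kind (for instance $w=x_{\lambda^{z}}$), at which point the oracle decomposition becomes unnecessary.
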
 
\begin{proof} 
Let $\xi$ be an index, $x=x_\xi$ and $y=x_{\lambda^x}$. 
Since $y\in L_{\zeta^x}\setminus L_{\lambda^x}$, it is not semirecognisable by Lemma \ref{semidecidable undecidable reals} \ref{semidecidable undecidable reals - part 2}. 
We claim that $y$ is cosemirecognisable. 
We shall assume that $\lambda^x=\lambda$; the general case is similar. 

For an input $z$, first test whether it fails to be a code for a wellfounded $L_\alpha$; if it is such a code, then check if it has an initial segment which itself has a $\Sigma_2$-substructure (equivalently $\alpha\geq \Sigma$); if it fails this test, then check if it has a $\Sigma_1$-substructure (if it does then $\alpha \neq \lambda$). 
If it fails this last point, then $z$ is a code for an $L_\alpha$ with $\alpha \leq \lambda$. 
Check if $z$ fails to be the $<_L$-least code for $L_\alpha$. 
Furthermore, run a universal machine and check whether some program halts beyond $\alpha$. 
\end{proof}




\section{Sets with countable decision time} 

Any set $A$ with countable semidecision time $\alpha$ is $\Sigma^1_1$ in any code for $\alpha$. 
If this is witnessed by a program $p$ and and $y$ is a code for $\alpha$, then 
$x\in A$ if and only if {\em there exists a halting computation of $p(x)$ 
along the ordering coded by $y$}.  Thus $A$ is  a $\Sigma^{1}_{1}(y)$ set. 
Similarly, any set with a countable decision time is both $\Sigma^1_1$ and $\Pi^1_1$ in any code for $\alpha$, and is thus Borel. 
The next result shows that for both implications, the converse fails. 
This complements Lemma \ref{long decision times}. 

\begin{theorem} 
\label{Borel set with uncountable decision time} 
There is a cocountable open decidable set $A$ that is not semidecidable in countable time. 
\end{theorem}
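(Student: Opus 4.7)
The goal is to construct a countable closed ITTM-decidable set $C$ such that $A := \omega^\omega \setminus C$ admits no semidecider with countable decision time. First I would observe that if every element of $C$ were individually recognisable (or even cosemirecognisable), then an ITTM running the $\omega$-many (co)semirecognisers in parallel and halting when each reports ``not $x_n$'' would semidecide $A$ in time $\sup_n \alpha_n$; by Theorem~\ref{sigma upper set clockable bound} and Theorem~\ref{supremum of cosemidecision times of reals} each $\alpha_n < \sigma$, so the supremum is countable by regularity of $\omega_1$. Consequently $C$ must contain elements that are not individually (co)semirecognisable, and the construction has to exploit the structure of $C$ as a whole rather than its elements.

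I would attempt a diagonalization over an effective enumeration $(p_n)_{n<\omega}$ of ITTM programs. For each $n$ set $y_n$ to be the $<_L$-least real $z$ with $z(0) = n$ and $p_n(z){\downarrow}$ if such a $z$ exists, and leave $y_n$ undefined otherwise. Let $C := \{y_n : n < \omega,\ y_n \text{ defined}\}$. The first-coordinate prefix places the $y_n$'s in pairwise disjoint clopen cylinders $[n] := \{z : z(0) = n\}$, so $C$ is discretely embedded and hence closed in $\omega^\omega$. For decidability of $C$ the plan is: on input $y$ with $n := y(0)$, simulate $p_n$ along the $<_L$-enumeration of reals in $[n]$, using the $\Sigma^z$-bound from Theorem~\ref{lambda-zeta-Sigma-theorem} to detect a halt or a final loop of $p_n(z)$ for each $z$ in at most $\Sigma^z$ steps, until $y_n$ is located (if it exists) and compared with $y$; this runs in at most $\omega_1$ steps. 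The killing argument: if $p_n$ semidecided $A$ with countable decision time, then $A \cap [n]$ would be non-empty by cocountability of $A$, so $p_n$ would halt on some real in $[n]$; hence $y_n$ would be defined and lie in $C = A^c$ with $p_n(y_n){\downarrow}$, contradicting $p_n$'s semidecision role.

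The main obstacle will be a circularity: the naive construction just sketched is too aggressive and would also kill any decider $D$ of $A$, since the derived semidecider $S$ that runs $D$ and halts on output $1$ is itself some $p_k$, and applying the killing argument to $p_k$ would yield $y_k \in C$ with $S(y_k){\downarrow}$ and $y_k \in A^c$ -- forcing $A$ to be not even decidable, contrary to the theorem. To repair this, the diagonalization must be restricted to those $p_n$ whose decision time is provably countable, for instance via $\Pi^1_1$-tests of the form $\forall y\ (p_n(y){\downarrow} \Rightarrow p_n(y){\downarrow}^{\leq\alpha})$ as $\alpha$ ranges over the clockable ordinals, and one must simultaneously verify that the decider of $C$ described above has decision time exactly $\omega_1$, so that its derived semidecider of $A$ also has decision time $\omega_1$ and escapes the diagonalization. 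Striking this balance -- killing every countable-time semidecider of $A$ while preserving the existence of an $\omega_1$-time decider of $C$ -- will be the crux of the proof.
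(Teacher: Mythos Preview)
Your diagonalisation plan has two genuine gaps that you have not closed.

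First, the decidability of $C$. You write that the decider ``runs in at most $\omega_1$ steps'', but no ittm computation on input $y$ ever reaches stage $\omega_1$; every computation halts or enters its final loop by $\Sigma^y$. Your algorithm enumerates reals in $<_L$-order searching for $y_n$, but on input $y$ you can only enumerate $L_{\lambda^y}$, and there is no reason $y_n$ should lie there when $y\neq y_n$. Worse, deciding $y\in C$ implicitly requires deciding $y\in L$ (since $y_n\in L$ by construction), and ``$y\in L$'' is merely $\Sigma^1_2$, not ittm-decidable. So as described, $C$ is not shown to be decidable at all.

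Second, the circularity. You correctly observe that the naive diagonalisation kills every program halting somewhere in $[n]$, including any would-be decider of $A$, and you propose to restrict to programs with ``provably countable decision time''. But you give no definition that is simultaneously (i) ittm-decidable uniformly in $n$, so that $C$ remains decidable, and (ii) broad enough to catch every countable-time semidecider of $A$. These two demands pull in opposite directions, and nothing in your sketch reconciles them.

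The paper sidesteps both problems by avoiding diagonalisation entirely. It takes $B$ to be the set of $0^n{}^\smallfrown\langle 1\rangle{}^\smallfrown x$ where $x$ is the $L$-least code for the least $L_\alpha$ in which the $n$-th $\Sigma_1$-sentence holds. This $B$ is visibly decidable, and a natural semidecider $p$ for $B$ must run a wellfoundedness test taking at least $\alpha$ steps on each such input, so its halting times on $B$ are cofinal in $\sigma$. Now if some $q$ semidecided $A=B^c$ in countable time, the dovetailed program $r=p\parallel q$ would be a \emph{total} program with countable decision time; by Theorem~\ref{countable decision times} its decision time would be strictly below $\sigma$. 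But on inputs in $B$ the program $q$ never halts, so $r$ inherits $p$'s halting times there, giving decision time at least $\sigma$ --- a contradiction. The whole argument rests on the already-proved bound in Theorem~\ref{countable decision times} rather than on a fresh diagonalisation.
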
 
\begin{proof} 
Let $\vec{\varphi}=\langle \varphi_n\mid n\in\omega\rangle$ be a computable enumeration of all $\Sigma_1$-formulas with one free variable. 
Let $B$ denote the discrete set of all $0^n{}^\smallfrown \langle1\rangle^\smallfrown x$, where $x$ is the $<_L$-least code for the least $L_\alpha$ where $\varphi_n(x)$ holds. 
Let further $p$ denote an algorithm that semidecides $B$ as follows.
First test if the input equals $0^\omega$ and halt in this case. 
Otherwise, test if the input is of the form $0^n{}^\smallfrown \langle1\rangle^\smallfrown x$, run a wellfoundedness test for $x$, which takes at least $\alpha$ steps for codes for $L_\alpha$, and then test whether $\alpha$ is least such that $\varphi_n(x)$ holds in $L_\alpha$. 
The decision time of $p$ is at least $\sigma$. 
Moreover, it is countable since $B$ is countable. 

Let $A$ denote the complement of $B$. 
Towards a contradiction, suppose that $q$ semidecides $A$ in countable time. 
Let $r$ be the decision algorithm for $B$ that runs $p$ and $q$ simultaneously. 
Then $r$ has a countable decision time $\alpha$ and by $\Sigma^1_2$-reflection, we have $\alpha<\sigma$. 
But this is clearly false, since $p$'s decision time is at least $\sigma$. 
\end{proof}


\section{Open problems} 

The above results for sets of reals also hold for Turing machines with ordinal time and tape with virtually the same proofs, while the results for singletons do not. 
If we restrict ourselves to sets of natural numbers, then the suprema of decision and semidecision times 
equal $\lambda$, the supremum of clockable ordinals. 

In the main results, we determined the suprema of various decision times, but we have not characterised the underlying sets. 

\begin{question} 
Is there a precise characterisation in the $L$-hierarchy of sets and singletons with countable decision, semidecision and cosemidecision times? 
\end{question}

We would like to draw a further connection with classical results in descriptive set theory. 
A set of reals is called \emph{thin} if it does not have an uncountable closed subset. 
It can be shown that $\{x\mid x \in L_{\lambda^x}\}$ is the largest thin semidecidable set 
(see \cite[Definition 1.6]{W}; this unpublished result of the third-listed author should appear in \cite{countableranks}).
We ask if the same characterisation holds for eventually semidecidable sets. 

\begin{question} 
Is $\{x\mid x \in L_{\lambda^x}\}$ the largest thin eventually semidecidable set? 
\end{question}

In particular, is every eventually semidecidable singleton an element of $L_{\lambda^x}$ (equivalently $L_{\Sigma^x}$)? 
An indication that these statements might be true is that one can show an analogous statement for null sets instead of countable sets: the largest ittm-semidecidable null set equals the largest ittm-eventually semidecidable null set. 
Related to Theorem \ref{Borel set with uncountable decision time}, it is natural to ask whether a thin semidecidable set can have halting times unbounded in $\omega_1$. 


We did not study specific values of decision times in this paper. 
Note that this is an entirely different type of problem, since it is sensitive to the precise definition of ittm's. 
Regarding Section \ref{section - gaps}, we know from \cite[Theorem 8.8]{hamkins2000infinite} that admissible ordinals are never clockable and from \cite[Theorem 50]{welch2009characteristics} that any ordinal that begins a gap in the clockable ordinals is always admissible. 
One can ask if an analogous result holds for decision times. 

\begin{question}
Is an ordinal that begins a gap of the (semi-)decision times always admissible? 
\end{question}

One can also ask about the decision times of single sets. 
By the Bounding Lemma \cite[Theorem 8]{W08}, no decidable set $A$ has a countable admissible decision time. 
However, the semidecision time of a decidable set can be admissible. 
To see this, note that the set of indices as in the proof of Theorem \ref{supremum of countable semidecision times} can be semidecided by a program with semidecision time $\sigma$. 
We do not know if this is possible for a set that is not decidable. 






\bibliographystyle{alpha}
\bibliography{References} 

\end{document}